\newtheorem{example}[theorem]{Example}
\newtheorem{remark}[theorem]{Remark}
\newcommand  \esssup {\mathop{\text{ess} \sup} }
\newcommand \eps   {\varepsilon}
\newcommand \R   {\mathbb{R}}
\newcommand \C   {\mathbb{C}}
\newcommand \K   {\mathcal{K}}
\newcommand \Kinf{\mathcal{K_\infty}}
\newcommand \KL  {\mathcal{KL}}
\newcommand \LL  {\mathcal{L}}
\newcommand{\Uc}{\ensuremath{\mathcal{U}}}
\newcommand{\Vc}{\ensuremath{\mathcal{V}}}
\newcommand \Iff   {\Leftrightarrow}
\newcounter{syscounter}
\newenvironment{sysnum}{\begin{list}{($\Sigma{\arabic{syscounter}}$)}%
{\settowidth{\labelwidth}{($\Sigma4$)}
\settowidth{\leftmargin}{($\Sigma4$)~}%
\usecounter{syscounter}}}
{\end{list}}
\title{
 Monotonicity Methods for Input-to-State Stability of Nonlinear Parabolic PDEs with Boundary Disturbances
\thanks{A. Mironchenko was supported by the German Research Foundation (DFG) within the project "Input-to-state stability and stabilization of distributed parameter systems" (grant Wi 1458/13-1)}
} 
\author{Andrii Mironchenko \thanks{Andrii Mironchenko is with Faculty of Computer Science and Mathematics, University of
  Passau, Germany 
(\email{andrii.mironchenko@uni-passau.de}). Corresponding author.}
\and Iasson Karafyllis
\thanks{Iasson Karafyllis Department of Mathematics, National Technical University of Athens, Greece 
(\email{iasonkar@central.ntua.gr}).
}
\and Miroslav Krstic
\thanks{Miroslav Krstic is with Department of Mechanical and Aerospace Engineering, University of California, San Diego, USA 
(\email{krstic@ucsd.edu}).
}
}
\date{\today}
\begin{document}
\maketitle

\begin{abstract}
We introduce a monotonicity-based method for studying input-to-state stability (ISS) of nonlinear parabolic equations with boundary inputs.
We first show that a monotone control system is ISS if and only if it is ISS w.r.t. constant inputs. Then we show by means of classical maximum principles that nonlinear parabolic equations with boundary disturbances are monotone control systems. 

With these two facts, we establish that ISS of the original nonlinear parabolic PDE with constant \textit{boundary disturbances} 
is equivalent to ISS of a closely related nonlinear parabolic PDE with constant \textit{distributed disturbances} and zero boundary condition. The last problem is conceptually much simpler and can be handled by means of various recently developed techniques.
As an application of our results, we show that the PDE backstepping controller which stabilizes linear reaction-diffusion equations from the boundary is robust with respect to additive actuator disturbances. 
\end{abstract}

\begin{keywords}
parabolic systems, infinite-dimensional systems, input-to-state stability, monotone systems, boundary control, nonlinear systems
\end{keywords}

\begin{AMS}
93C20, 93C25, 37C75, 93D30, 93C10, 35K58.
\end{AMS}

\pagestyle{myheadings}
\thispagestyle{plain}
\markboth{MONOTONICITY AND ISS OF PARABOLIC SYSTEMS}{MONOTONICITY AND ISS OF PARABOLIC SYSTEMS}

\section{Introduction}

The concept of input-to-state stability (ISS), which unified Lyapunov and input-output approaches, plays a foundational role in nonlinear control theory \cite{Son08}. It is central for robust stabilization of nonlinear systems \cite{FrK08,KKK95}, design of nonlinear observers \cite{ArK01}, analysis of large-scale networks \cite{JTP94,DRW07} etc.
Interest in ISS of infinite-dimensional systems has been steadily growing since the end of 1990-s, but for a decade it was limited to time-delay systems, see e.g. \cite{Tee98, PeJ06, PKJ08, Krs08d}. Recently, a rapid development of an ISS theory of abstract infinite-dimensional systems \cite{JLR08, KaJ11, DaM13b, MiI15b, MiW17b, Mir16} and more specifically of partial differential equations \cite{MaP11, PrM12, MiI15b, AVP16, CDP17, PiO17, TPT16} has been taking place. Important results achieved include characterizations of ISS and local ISS for a rather general class of nonlinear infinite-dimensional systems \cite{MiW17b, Mir16}, abstract nonlinear small-gain theorems \cite{KaJ11, MiI15b}, applications of ISS Lyapunov theory to analysis and control of various classes of PDE systems \cite{PrM12, AVP16, TPT16, CDP17, PiO17} etc.
However, most of these papers are devoted to PDEs with distributed inputs.
\textit{In this work we study input-to-state stability (ISS) of nonlinear parabolic partial differential equations (PDEs) with boundary disturbances on multidimensional spatial domains.} This question naturally arises in such fundamental problems of PDE control as robust boundary stabilization of PDE systems, design of robust boundary observers, stability analysis of cascades of parabolic and ordinary differential equations (ODEs) etc.

It is well known, that PDEs with boundary disturbances can be viewed as evolution equations in Banach spaces with unbounded input (disturbance) operators. This makes the analysis of such systems much more involved than stability analysis of PDEs with distributed disturbances (which are described by bounded input operators), even in the linear case. 

At the same time, ISS of linear parabolic systems w.r.t. boundary disturbances has been studied in several recent papers using different methodologies \cite{AWP12, KaK16b, KaK17a, JNP16}.
In \cite{AWP12} the authors attacked this problem by means of Lyapunov methods. Due to the methodology followed in \cite{AWP12} boundary disturbances must be differentiated w.r.t. time (which is also needed when one tries to switch from boundary to distributed disturbances). As a result, in \cite{AWP12} ISS of a parabolic system w.r.t. $C^1$ norm has been achieved (known as $D^1$-ISS, see \cite[p. 190]{Son08}).

In \cite{KaK16b, KaK17a, KaK17b} linear parabolic PDEs with Sturm-Liouville operators over 1-dimensional spatial domain have been treated by using two different methods: (i) the spectral decomposition of the solution, and (ii) the approximation of the solution by means of a  finite-difference scheme. 
This made possible to avoid differentiation of boundary disturbances, and to obtain ISS of classical solutions w.r.t. $L^\infty$ norm of disturbances, as well as in weighted $L^2$ and $L^1$ norms. 
An advantage of these methods is that this strategy can be applied also to other types of linear evolution PDEs. At the same time, for multidimensional spatial domains, the computations can become quite complicated.

In \cite{JNP16} ISS of linear boundary control systems has been approached by means of methods of semigroup and admissibility theory. In particular, interesting and nontrivial relations between ISS and integral input-to-state stability (iISS) have been obtained. 
An advantage of this method is that it encompasses very broad classes of linear infinite-dimensional systems, but due to the lack of proper generalizations of admissibility to general nonlinear systems, it cannot be applied (at least at current stage) to nonlinear distributed parameter systems.

In this paper, we propose a novel method for investigation of parabolic PDEs with boundary disturbances. In contrast to previous results, we do not restrict ourselves to linear equations over 1-dimensional spatial domains. Our results are valid for a class of nonlinear equations over multidimensional bounded domains with a sufficiently smooth boundary. Our method is based on the concept of monotone control systems introduced in \cite{AnS03} and inspired by the theory of monotone dynamical systems pioneered by M. Hirsch in 1980-s in a series of papers, beginning with \cite{Hir82}. For the introduction to this theory, one may consult \cite{Smi95}.
An early effort to use monotonicity methods to study of ISS of infinite-dimensional systems has been made in \cite{DaM10} to show ISS of a quite special class of parabolic systems with distributed inputs and Neumann boundary conditions.

In this paper, we study ISS of a broad class of nonlinear parabolic equations with boundary disturbances.
Using maximum principles \cite{PrW12, Fri83}, we show under certain regularity assumptions that such systems are monotone control systems. 
Furthermore, we show that a monotone control system (and in particular a nonlinear parabolic PDE system with boundary disturbances) is ISS if and only if it is ISS over a much smaller class of inputs (e.g. constant inputs).
This is achieved by proving that, for any given disturbance, there is a larger constant disturbance which leads to the larger deviation from the origin, and hence, in a certain sense, the constant disturbances are the "worst case" ones.

This nice property helps us to show that ISS of the original nonlinear parabolic PDE with constant \textit{boundary disturbances} is equivalent to ISS of a closely related nonlinear parabolic PDE with constant \textit{distributed disturbances}. 
The latter problem has been studied extensively in the last years, and a number of powerful results are available for this class of systems. 
In particular, in \cite{MaP11}, constructions of strict Lyapunov functions for certain nonlinear parabolic systems have been provided.
In \cite{Mir16}, the Lyapunov characterizations of local ISS property have been shown for a general class of infinite-dimensional systems, which encompass in particular parabolic systems. In \cite{MiI15b, DaM13}, ISS and integral ISS small-gain theorems for nonlinear parabolic systems interconnected via spatial domain have been proved which give powerful tools to study stability of large-scale parabolic systems on the basis of knowledge of stability of its components. 
With the help of the results in this paper, this machinery can be used to analyze ISS of nonlinear parabolic PDEs with boundary inputs. 


Finally, we apply the derived ISS criteria to the problem of robust stabilization of linear parabolic systems by means of a boundary control in the presence of actuator disturbances. We prove that stabilizing controllers, achieved by PDE backstepping method (see for instance \cite{KrS08, SmK10}), are in fact ISS stabilizing controllers w.r.t. actuator disturbances.

Next, we introduce some notation used throughout these notes.
By $\R_+$ we denote the set of nonnegative real numbers. For $z \in \R^n$ the Euclidean norm of $z$ is denoted by $|z|$.
For any open set $G\subset \R^n$ we denote by $\partial G$ the boundary of $G$; by $\overline{G}$ the closure of $G$ and by $\mu(G)$ the Lebesgue measure of $G$.
Also for such $G$ and any $p\in[1,+\infty)$ we denote 
by $L^p(G)$ the space of Lebesgue measurable functions $y$ with $\|y\|_p=\Big(\int_G |y(z)|^p dz\Big)^{1/p}$, 
by $L^\infty(G)$ the space of Lebesgue measurable functions $y$ with $\|y\|_\infty=\esssup_{z\in G}|y(z)|$, 
and by $H^k(G)$ the set of $g\in L^2(G)$, which possess weak derivatives up to the $k$-th order, all of which belong to $L^2(G)$.
$C^k(G)$ consists of $k$ times continuously differentiable functions defined on $G$ and $C_0^1(G)$ consists of a functions from $C^1(G)$ which have a compact support.
$H^{1}_0(G)$ is a closure of $C_0^1(G)$ in the norm of $H^1(G)$. 
If $I \subset \R_+$, then $C^{1,2}(I\times G)$ means the set of functions mapping $I\times G$ to $\R$, which are continuously differentiable w.r.t. the first argument and possess continuous second derivatives w.r.t. the second arguments.

Also we will use the following classes of comparison functions.
\begin{equation*}
\begin{array}{ll}
{\K} &:= \left\{\gamma:\R_+ \to \R_+ \ : \ \gamma\mbox{ is continuous and strictly increasing, }\gamma(0)=0\right\}\\
{\K_{\infty}}&:=\left\{\gamma\in\K \ :\ \gamma\mbox{ is unbounded}\right\}\\
{\LL}&:=\left\{\gamma:\R_+ \to \R_+ \ :\ \gamma\mbox{ is continuous and decreasing with}
 \lim\limits_{t\rightarrow\infty}\gamma(t)=0 \right\}\\
{\KL} &:= \left\{\beta: \R_+^2 \to \R_+ \ : \ \beta(\cdot,t)\in{\K},\ \forall t \geq 0,\  \beta(r,\cdot)\in {\LL},\ \forall r >0\right\}
\end{array}
\end{equation*}


\section{Monotonicity of control systems}
\label{sec:Framework_Monotonicity}


We start with a definition of a control system.
\begin{definition}
\label{Steurungssystem}
Consider a triple $\Sigma=(X,\Uc,\phi)$, consisting of 
\begin{enumerate}[(i)]  
 \item A normed linear space $(X,\|\cdot\|_X)$, called the {state space}, endowed with the norm $\|\cdot\|_X$.
 \item A set of  input values $U$, which is a nonempty subset of a certain normed linear space.
%
    %
    %
%
 \item A normed linear space of inputs $\Uc \subset \{f:\R_+ \to U\}$ endowed with the norm $\|\cdot\|_{\Uc}$.
   We assume that $\Uc$ satisfies \textit{the axiom of shift invariance}, which states that for all $u \in \Uc$ and all $\tau\geq0$ the time
shift $u(\cdot + \tau)$ is in $\Uc$.

\item A family of nonempty sets $\{\Uc(x) \subset \Uc: x \in X\}$, where $\Uc(x)$ is a set of admissible inputs for the state $x$.

\item A transition map $\phi:\R_+ \times X \times \Uc \to X$, defined for any $x\in X$ and any $u\in \Uc(x)$ on a certain subset of $\R_+$.
\end{enumerate}
The triple $\Sigma$ is called a (forward-complete) \textbf{control system}, if the following properties hold:
\begin{sysnum}
 \item \textit{Forward-completeness}: for every $x\in X$, $u\in\Uc(x)$ and for all $t \geq 0$ the value 
$\phi(t,x,u) \in X$ is well-defined.
 \item\label{axiom:Identity} \textit{The identity property}: for every $(x,u) \in X \times \Uc(x)$
          it holds that $\phi(0,x,u)=x$.
 \item \label{axiom:Causality}\textit{Causality}: for every $(t,x,u) \in \R_+ \times X \times
          \Uc(x)$, for every $\tilde{u} \in \Uc(x)$, such that $u(s) =
          \tilde{u}(s)$, $s \in [0,t]$ it holds that $\phi(t,x,u) = \phi(t,x,\tilde{u})$.
  \item \label{axiom:Cocycle} \textit{The cocycle property}: for all $t,h \geq 0$, for all
                  $x \in X$, $u \in \Uc(x)$ we have $u(t+\cdot) \in \Uc(\phi(t,x,u))$ and
\[
\phi(h,\phi(t,x,u),u(t+\cdot))=\phi(t+h,x,u).
\]
\end{sysnum}
\end{definition}
In the above definition $\phi(t,x,u)$ denotes the state of a system at the moment $t \in
\R_+$ corresponding to the initial condition $x \in X$ and the input $u \in \Uc(x)$.
A pair $(x,u)\in X\times\Uc(x)$ is referred to as an \textit{admissible pair}.

\begin{remark}\em
\label{rem:Compatibility_Conditions}
For wide classes of systems, in particular for ordinary differential equations, one can assume that $\Uc(x)=\Uc$ for all $x\in X$, that is every input is admissible for any state. On the other hand, the classical solutions of PDEs with Dirichlet boundary inputs have to satisfy compatibility conditions (see Section~\ref{ISS_nonlinear_parabolic_PDEs}), and hence for such systems $\Uc(x)\neq \Uc$ for any $x\in X$.
Another class of systems for which one cannot expect that $\Uc(x)=\Uc$ for all $x\in X$ are differential-algebraic equations 
(DAEs), see e.g. \cite{KuM06, KrT15}.
\end{remark}

\begin{definition}
A subset $K \subset X$ of a normed linear space $X$ is called a \textit{positive cone} if $K \cap (-K)=\{0\}$ 
and for all $a \in \R_+$ and all $x,y \in K$ it follows that $ax \in K$; $x+y \in K$.
\end{definition}

\begin{definition}
A normed linear space $X$ together with a cone $K \subset X$ is called an \textit{ordered normed linear space} (see \cite{Kra64}), which we denote $(X,K)$ with an order $\leq$ given by $x \leq y\ \Iff \  y-x \in K$. Analogously $x \geq y\ \Iff \  x-y \in K$.
\end{definition}

\begin{definition}
\index{control system!ordered}
We call a control system $\Sigma=(X,\Uc,\phi)$ ordered, if $X$ and $\Uc$ are ordered normed linear spaces.
\end{definition}

An important for applications subclass of control systems are monotone control systems:
\begin{definition} 
An ordered control system $\Sigma=(X,\Uc,\phi)$ is called monotone, provided for all $t \geq 0$, all $x_1, x_2 \in X$ with $x_1 \leq x_2$ and all $u_1 \in\Uc(x_1), u_2 \in \Uc(x_2)$ with $u_1 \leq u_2$ it holds that 
$\phi(t,x_1,u_1)  \leq \phi(t,x_2,u_2)$.
\end{definition}

To treat situations when the monotonicity w.r.t. initial states is not available, the following definition is useful:
\begin{definition} 
An ordered control system $\Sigma=(X,\Uc,\phi)$ is called monotone w.r.t. inputs, provided for all $t \geq 0$, all $x \in X$ and all $u_1,u_2 \in \Uc(x)$ with $u_1 \leq u_2$ it holds that $ \phi(t,x,u_1)  \leq \phi(t,x,u_2)$.
\end{definition}


\section{Input-to-state stability of monotone control systems}
\label{sec:ISS_monotone_control_systems}

Next we introduce the notion of input-to-state stability, which will be central in this paper.
\begin{definition} 
\label{def:ISS_Uc}
Let $\Sigma=\left(X,\mathcal{U},\phi \right)$ be a control system. Let $\Uc_{c}$ be a subset of $\mathcal{U}$. 
System $\Sigma$ is called \textit{input-to-state stable (ISS) with respect to inputs in $\Uc_{c}$} if there exist functions $\beta \in \KL$, $\gamma \in \K$ such that for every $x \in X$ for which $\Uc(x ) \bigcap \Uc_{c}$ is non-empty, the following estimate holds for all $u \in \Uc(x ) \bigcap \Uc_{c}$ and $t\ge 0$:
\begin{eqnarray}
\| \phi(t,x,u) \|_X \leq \beta(\| x \|_X,t) + \gamma( \|u\|_{\Uc}).
\label{eq:ISS_estimate}
\end{eqnarray}
If $\Sigma$ is ISS w.r.t. inputs from $\Uc$, then $\Sigma$ is called \textit{input-to-state stable (ISS)}.
\end{definition}

For applications the following notion, which is stronger than ISS is of importance:
\begin{definition} 
\label{def:exp-ISS_Uc}
Let $\Sigma=\left(X,\mathcal{U},\phi \right)$ be a control system. Let $\Uc_{c}$ be a subset of $\mathcal{U}$. 
System $\Sigma$ is called \textit{exponentially input-to-state stable (exp-ISS) with respect to inputs in $\Uc_{c}$} if there exist constants $M,a$ and $\gamma\in\Kinf$ such that for every $x \in X$ for which $\Uc(x ) \bigcap \Uc_{c}$ is non-empty, the following estimate holds for all $u \in \Uc(x ) \bigcap \Uc_{c}$ and $t\ge 0$:
\begin{eqnarray}
\| \phi(t,x,u) \|_X \leq Me^{-at}\| x \|_X + \gamma(\|u\|_{\Uc}).
\label{eq:exp-ISS_estimate}
\end{eqnarray}
If $\Sigma$ is exp-ISS w.r.t. inputs from $\Uc$, then $\Sigma$ is called \textit{exponentially input-to-state stable}.
If in addition $\gamma$ can be chosen to be linear, then $\Sigma$ is exp-ISS with a linear gain function.
\end{definition}

\begin{remark}\em
\label{rem:eISS}
Exponential ISS as defined in Definition~\ref{def:exp-ISS_Uc} has been used in \cite{DaM13,LiH15} under the name of eISS.
A similar notion was used in the context of stochastic systems in \cite[Definition 2.4]{SpT03}.
In some works (see e.g. \cite{PrW96}) exponential ISS is defined in a different way (under the name of expISS) and is related to the existence of an ISS Lyapunov function with an exponential decay rate along the trajectories, which is not equivalent to the notion, introduced in Definition~\ref{def:exp-ISS_Uc}.
\end{remark}

We are also interested in the stability properties of control systems in absence of inputs.
\begin{definition}
\label{def:0UGAS_Uc}
A control system $\Sigma=\left(X,\mathcal{U},\phi \right)$ is {\it globally asymptotically
stable at zero uniformly with respect to the state} (0-UGAS), if there
exists a $ \beta \in \KL$, such that for all $x \in X$: $0 \in \Uc(x)$ and for all $
t\geq 0$ it holds that
\begin{equation}
\label{UniStabAbschaetzung}
\left\| \phi(t,x,0) \right\|_{X} \leq  \beta(\left\| x \right\|_{X},t) .
\end{equation}
\end{definition}

%
%

It may be hard to verify the ISS estimate \eqref{eq:ISS_estimate} for all admissible pairs of states and inputs.
Therefore a natural question appears: to find a smaller set of admissible states and inputs, so that validity of the ISS estimate \eqref{eq:ISS_estimate} implies ISS of the system for all admissible pairs (possibly with larger $\beta$ and $\gamma$). 
For example, for wide classes of systems it is enough to check ISS estimates on the properly chosen dense subsets of the space of admissible pairs (a so-called "density argument", see e.g. \cite[Lemma 2.2.3]{Mir12}). As Propositions~\ref{prop:ISS_Monotone_Systems}, \ref{prop:ISS_Monotone_states_inputs_Systems} show, for monotone control systems such sets can be much more sparse.

We start with the case, when all $(x,u) \in X \times \Uc$ are admissible pairs.
\begin{proposition}
\label{prop:ISS_Monotone_Systems}
Let $(X,\Uc,\phi)$ be a control system that is monotone with respect to inputs  with $\Uc(x)=\Uc$ for all $x\in X$. Furthermore, let $\Uc_{c}$ be a subset of $\mathcal{U}$ and let the following two conditions hold:
\begin{enumerate}
 \item[(i)] There exists $\rho \in\Kinf$ so that for any $x_-,x,x_+ \in X$ satisfying $x_-\leq x \leq x_+$ it holds that
\begin{eqnarray*}
\|x\|_X \leq \rho(\|x_-\|_X + \|x_+\|_X).
\end{eqnarray*}
 \item[(ii)] There exists $\eta \in \Kinf$ so that for any $u \in \Uc$ there are $u_-,u_+ \in \Uc_{c}$, satisfying $u_-\leq u \leq u_+$ and $\|u_-\|_{\Uc} \leq \eta(\|u\|_{\Uc})$ and $\|u_+\|_{\Uc} \leq \eta(\|u\|_{\Uc})$. 
\end{enumerate}
Then $(X,\Uc,\phi)$ is ISS if and only if $(X,\Uc,\phi)$ is ISS w.r.t. inputs in $\Uc_{c}$.

Moreover, if $\rho$ is linear, then $\Sigma$ is exp-ISS if and only if $\Sigma$ is exp-ISS w.r.t. inputs in $\Uc_{c}$.
If additionally $\Sigma$ is exp-ISS w.r.t. inputs in $\Uc_{c}$ with a linear gain function and $\eta$ is linear, then $\Sigma$ is exp-ISS
with a linear gain function.
\end{proposition}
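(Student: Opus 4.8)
The plan is to prove both implications separately, observing that the forward direction (ISS $\Rightarrow$ ISS w.r.t.\ inputs in $\Uc_{c}$) is immediate. Since $\Uc(x)=\Uc\supseteq\Uc_{c}$, the intersection $\Uc(x)\cap\Uc_{c}=\Uc_{c}$ is nonempty (applying condition (ii) to $u=0\in\Uc$ already yields elements $u_\pm\in\Uc_{c}$), and the estimate \eqref{eq:ISS_estimate}, assumed valid for all $u\in\Uc$, holds \emph{a fortiori} for all $u\in\Uc_{c}$ with the very same $\beta,\gamma$. All the substance lies in the converse.

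For the converse I assume \eqref{eq:ISS_estimate} holds for all $u\in\Uc_{c}$ with some $\beta\in\KL$, $\gamma\in\K$, and fix an arbitrary $x\in X$ and $u\in\Uc$. First I invoke condition (ii) to obtain $u_-,u_+\in\Uc_{c}$ with $u_-\le u\le u_+$ and $\|u_\pm\|_{\Uc}\le\eta(\|u\|_{\Uc})$. Because the system is monotone \emph{with respect to inputs} and the three trajectories all start from the same state $x$, monotonicity gives the sandwich
\[
\phi(t,x,u_-)\le\phi(t,x,u)\le\phi(t,x,u_+)\qquad\text{for all }t\ge0 .
\]
Applying condition (i) with $x_-=\phi(t,x,u_-)$ and $x_+=\phi(t,x,u_+)$ then bounds the middle trajectory by $\rho\big(\|\phi(t,x,u_-)\|_X+\|\phi(t,x,u_+)\|_X\big)$. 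This is the crux: only input-monotonicity, not full monotonicity, is needed, which is exactly what is available.

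The remainder is bookkeeping with comparison functions. Since $u_\pm\in\Uc_{c}$, the assumed estimate applies to each extremal trajectory, and monotonicity of $\gamma$ together with $\|u_\pm\|_{\Uc}\le\eta(\|u\|_{\Uc})$ yields
\[
\|\phi(t,x,u_\pm)\|_X\le\beta(\|x\|_X,t)+\gamma\big(\eta(\|u\|_{\Uc})\big).
\]
Summing these and using that $\rho$ is increasing gives $\|\phi(t,x,u)\|_X\le\rho\big(2\beta(\|x\|_X,t)+2\gamma(\eta(\|u\|_{\Uc}))\big)$. To separate this into additive ISS form I apply the weak triangle inequality $\rho(a+b)\le\rho(2a)+\rho(2b)$, valid for every $\rho\in\Kinf$, and set $\tilde\beta(r,t):=\rho(4\beta(r,t))$ and $\tilde\gamma(s):=\rho(4\gamma(\eta(s)))$. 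A routine verification confirms $\tilde\beta\in\KL$ and $\tilde\gamma\in\K$, since composing $\KL$- and $\K$-functions with the $\Kinf$-function $\rho$ preserves the respective classes; this is the ISS estimate over all of $\Uc$.

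The exponential refinements follow by rerunning the identical argument. If $\rho$ is linear, $\rho(s)=cs$, the sandwich step directly produces $\|\phi(t,x,u)\|_X\le 2cMe^{-at}\|x\|_X+2c\,\gamma(\eta(\|u\|_{\Uc}))$, i.e.\ exp-ISS with $\tilde M=2cM$, decay rate $a$, and gain $\tilde\gamma=2c\,\gamma\circ\eta\in\Kinf$; and if in addition $\gamma$ and $\eta$ are linear, then so is $\tilde\gamma$. I do not anticipate a genuine obstacle: the whole proof turns on the single monotone sandwich combined with bound (i). The only place demanding care is confirming that the comparison-function compositions land in the correct classes and that the $\rho(a+b)$ term is decoupled correctly via the weak triangle inequality; everything else is direct substitution.
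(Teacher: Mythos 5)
Your proposal is correct and follows essentially the same route as the paper's proof: the input-monotone sandwich $\phi(t,x,u_-)\le\phi(t,x,u)\le\phi(t,x,u_+)$, condition (i) applied to the three trajectories, the ISS estimate for the extremal inputs combined with $\|u_\pm\|_{\Uc}\le\eta(\|u\|_{\Uc})$, and the weak triangle inequality $\rho(a+b)\le\rho(2a)+\rho(2b)$ to decouple the bound, with the same treatment of the exponential and linear-gain refinements. No gaps.
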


\begin{proof}
The "$\Rightarrow$" direction is evident, thus we show "$\Leftarrow$".
Let $(X,\Uc,\phi)$ be ISS w.r.t. inputs in $\Uc_{c}$.
Pick any $u\in \Uc$ and let $u_-,u_+ \in \Uc_{c}$ be as in assumption (ii) of the proposition.
Due to monotonicity of $(X,\Uc,\phi)$ with respect to inputs and since $\Uc(x)=\Uc$ for any $x\in X$ we have that for any $t\geq 0$ and any $x \in X$ 
\begin{eqnarray}
\phi(t,x,u_-) \leq \phi(t,x,u) \leq \phi(t,x,u_+).
\label{eq:tmp_estimate}
\end{eqnarray}
In view of the assumption (i) we have that
\begin{eqnarray*}
\|\phi(t,x,u)\|_X \leq \rho\big(\|\phi(t,x,u_-)\|_X + \|\phi(t,x,u_+)\|_X\big),
\end{eqnarray*}
which implies due to ISS of $(X,\Uc,\phi)$ w.r.t. inputs in $\Uc_{c}$, that there exist $\beta \in \KL$ and $\gamma\in\Kinf$ so that 
\begin{eqnarray*}
\|\phi(t,x,u)\|_X &\leq& \rho\Big(\beta\big(\| x \|_X,t\big) + \gamma\big( \|u_-\|_{\Uc}\big)  + \beta\big(\| x \|_X,t\big) + \gamma\big( \|u_+\|_{\Uc}\big)\Big)\\
&\leq& \rho\Big(2\beta\big(\| x \|_X,t\big) + 2 \gamma \circ \eta\big( \|u\|_{\Uc}\big)\Big),
\end{eqnarray*}
which due to the trivial inequality $\rho(a+b)\leq \rho(2a) + \rho(2b)$, valid for all $a,b\in \R_+$, implies 
\begin{eqnarray*}
\|\phi(t,x,u)\|_X &\leq& \rho\Big(4\beta\big(\| x \|_X,t\big)\Big) + \rho\Big(4 \gamma \circ \eta\big( \|u\|_{\Uc}\big)\Big)\\
&=:& \hat\beta\big(\| x \|_X,t\big) + \hat\gamma\big( \|u\|_{\Uc}\big),
\end{eqnarray*}
where $\hat\beta:(r,t) \mapsto \rho\big(4\beta(r,t)\big)$ is a $\KL$ function and $\hat\gamma(r):r \mapsto \rho\big(4 \gamma \circ \eta(r)\big)$ is a $\Kinf$ function.
This shows ISS of $(X,\Uc,\phi)$.

If $\rho$ is linear, and $\Sigma$ is exp-ISS w.r.t. inputs in $\Uc_{c}$, then above argument justifies exp-ISS of $\Sigma$.
Clearly, if $\eta$ and $\gamma$ are linear, then also $\tilde\gamma$ is linear.
\end{proof}

\begin{example}
\label{rem:Results_in_ODE_context}
Consider ordinary differential equations of the form
\begin{eqnarray}
\dot{x}=f(x,u),
\label{eq:ODEsys}
\end{eqnarray}
with $X=\R^n$ with an order induced by the cone $\R^n_+$, $\Uc:=L^\infty(\R_+,\R^m)$ with the order induced by the cone $L^\infty(\R_+,\R^m_+)$ and $\Uc(x)=\Uc$ for all $x\in\R^n$. 
Under assumptions that $f$ is Lipschitz continuous w.r.t. the first argument uniformly w.r.t. the second one and that \eqref{eq:ODEsys} is forward complete,
\eqref{eq:ODEsys} defines a control system $(X,\Uc,\phi)$, where $\phi(t,x_0,u)$ is a state of \eqref{eq:ODEsys} at the time $t$ corresponding to $x(0)=x_0$ and input $u$.

Assume that \eqref{eq:ODEsys} is monotone w.r.t. inputs with such $X$ and $\Uc$ and consider $\Uc_{c}:=\{u\in\Uc:\exists k\in\R^m:\ u(t)=k \text{ for a.e. } t\in\R_+\}$. It is easy to verify that assumptions of Proposition~\ref{prop:ISS_Monotone_Systems} are fulfilled, and hence \eqref{eq:ODEsys}
is ISS iff it is ISS w.r.t. the inputs with constant in time controls.

This may simplify analysis of ISS of monotone ODE systems since input-to-state stable ODE systems with constant inputs have some specific properties,
see e.g. \cite[pp. 205--206]{Son08}.
\end{example}


As we argued in Remark~\ref{rem:Compatibility_Conditions}, for many systems $\Uc(x) \neq \Uc$ for some $x\in X$. For such systems Proposition~\ref{prop:ISS_Monotone_Systems} is inapplicable, due to the fact that existence of inputs $u_-,u_+ \in \Uc_{c}$ for a given initial condition $x$ and for an input $u$, satisfying assumption (ii) of Proposition~\ref{prop:ISS_Monotone_Systems} does not guarantee that the pairs $(x,u_-)$ and $(x,u_+)$ are admissible, which is needed for the proof of Proposition~\ref{prop:ISS_Monotone_Systems}.
However, if $(X,\Uc,\phi)$ is in addition monotone w.r.t. states, the following holds:
\begin{proposition}
\label{prop:ISS_Monotone_states_inputs_Systems}
Let $\Sigma=\left(X,\mathcal{U},\phi \right)$ be a control system that is monotone with respect to states and inputs. 
Assume that $\Uc_{c}$ is a subset of $\mathcal{U}$ and let the following two conditions hold:
\begin{itemize}
   \item[(i)]There exists $\rho \in \Kinf $ so that for any $x_{-} ,x,x_{+} \in X$ satisfying $x_{-} \le x\le x_{+} $ it holds that
\begin{equation} \label{GrindEQ__19_} 
\left\| x\right\| _{X} \le \rho \left(\left\| x_{-} \right\| _{X} +\left\| x_{+} \right\| _{X} \right).
\end{equation} 
\item[(ii)] There exist $\eta ,\xi \in \Kinf $ so that for every $x\in X$, $u\in \Uc(x)$ and for every $\varepsilon >0$ there exist $x_{-} ,x_{+} \in X$ and $u_{-} \in \Uc(x_{-} ) \bigcap \Uc_{c}$, $u_{+} \in \Uc(x_{+}) \bigcap \Uc_{c}$, satisfying $x_{-} \le x\le x_{+} $, $u_{-} \le u\le u_{+} $, so that the estimates
\begin{equation} \label{GrindEQ__20_} 
\max \left(\left\| u_{-} \right\| _{\mathcal{U}} ,\left\| u_{+} \right\| _{\mathcal{U}} \right)\le \eta \left(\left\| u\right\| _{\mathcal{U}} +\varepsilon \right),
\end{equation} 
\begin{equation} \label{GrindEQ__21_} 
\max \left(\left\| x_{-} \right\| _{X} ,\left\| x_{+} \right\| _{X} \right)\le \xi \left(\left\| x\right\| _{X} +\left\| u\right\| _{\mathcal{U}} +\varepsilon \right).
\end{equation}
\end{itemize}
hold. Then $\Sigma$ is ISS if and only if $\Sigma$ is ISS w.r.t. inputs in $\Uc_{c}$.

Moreover, if $\rho$ and $\xi$ are linear, then $\Sigma$ is exp-ISS if and only if $\Sigma$ is exp-ISS w.r.t. inputs in $\Uc_{c}$.
If additionally $\Sigma$ is exp-ISS w.r.t. inputs in $\Uc_{c}$ with a linear gain function and $\eta$ is linear, then $\Sigma$ is exp-ISS with a linear gain function.
\end{proposition}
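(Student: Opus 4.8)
The plan is to follow the skeleton of the proof of Proposition~\ref{prop:ISS_Monotone_Systems}, adapting it to the two new difficulties: admissibility now forces the comparison inputs $u_\pm$ to be attached to comparison \emph{states} $x_\pm$, and an auxiliary parameter $\varepsilon>0$ must be removed at the end. The direction ``$\Rightarrow$'' is immediate since $\Uc_{c}\subset\Uc$. For ``$\Leftarrow$'', assume the estimate \eqref{eq:ISS_estimate} holds with some $\beta\in\KL$, $\gamma\in\K$ for all admissible pairs whose input lies in $\Uc_{c}$. Fix an arbitrary admissible pair $(x,u)$ with $u\in\Uc(x)$, fix $t\ge 0$ and $\varepsilon>0$, and invoke assumption (ii) to obtain $x_-,x_+\in X$ together with $u_-\in\Uc(x_-)\cap\Uc_{c}$ and $u_+\in\Uc(x_+)\cap\Uc_{c}$ satisfying $x_-\le x\le x_+$ and $u_-\le u\le u_+$.

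Next I would apply monotonicity with respect to states and inputs twice. Using $x_-\le x$ with $u_-\le u$ (and $u_-\in\Uc(x_-)$, $u\in\Uc(x)$) gives $\phi(t,x_-,u_-)\le\phi(t,x,u)$, while $x\le x_+$ with $u\le u_+$ (and $u\in\Uc(x)$, $u_+\in\Uc(x_+)$) gives $\phi(t,x,u)\le\phi(t,x_+,u_+)$; together these sandwich
\[
\phi(t,x_-,u_-)\le\phi(t,x,u)\le\phi(t,x_+,u_+).
\]
Here the admissibility memberships are precisely what legitimises both comparisons, which is the role the extra monotonicity in states plays relative to Proposition~\ref{prop:ISS_Monotone_Systems}. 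Condition (i) then bounds $\|\phi(t,x,u)\|_X$ by $\rho\bigl(\|\phi(t,x_-,u_-)\|_X+\|\phi(t,x_+,u_+)\|_X\bigr)$, and since $(x_\mp,u_\mp)$ are admissible pairs with inputs in $\Uc_{c}$, the ISS-w.r.t.-$\Uc_{c}$ estimate applies to each term. Substituting the bounds \eqref{GrindEQ__20_}--\eqref{GrindEQ__21_} yields, for every $\varepsilon>0$,
\[
\|\phi(t,x,u)\|_X\le\rho\Bigl(2\beta\bigl(\xi(\|x\|_X+\|u\|_{\Uc}+\varepsilon),t\bigr)+2\gamma\bigl(\eta(\|u\|_{\Uc}+\varepsilon)\bigr)\Bigr).
\]

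Since the left-hand side is independent of $\varepsilon$ and the right-hand side is continuous in $\varepsilon$ (the comparison functions and $\rho$ are continuous), I would let $\varepsilon\to0^+$ to discard $\varepsilon$. The remaining and main obstacle is that the argument of $\beta$ couples $\|x\|_X$ and $\|u\|_{\Uc}$ through $\xi$, whereas \eqref{eq:ISS_estimate} demands a clean split into a $\KL$-term in $\|x\|_X$ plus a $\K$-term in $\|u\|_{\Uc}$. To decouple them I would repeatedly use the elementary inequality $\theta(a+b)\le\theta(2a)+\theta(2b)$ — valid for $\theta=\rho$, $\theta=\xi$, and, for each fixed $t$, for $\theta=\beta(\cdot,t)\in\K$ (increasing and vanishing at $0$) — together with the monotone-in-time bound $\beta(r,t)\le\beta(r,0)$, which converts the spurious $\KL$-dependence on $\|u\|_{\Uc}$ into a genuine $\K$-gain. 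This produces an estimate
\[
\|\phi(t,x,u)\|_X\le\hat\beta(\|x\|_X,t)+\hat\gamma(\|u\|_{\Uc})
\]
with $\hat\beta(r,t):=\rho\bigl(8\beta(2\xi(2r),t)\bigr)\in\KL$ and $\hat\gamma$ a finite sum of $\K$-functions assembled from $\rho,\beta(\cdot,0),\xi,\gamma,\eta$, establishing ISS.

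For the exponential refinements I would rerun the computation with $\beta(r,t)=Me^{-at}r$ and $\rho,\xi$ linear. Linearity of $\xi$ keeps the decaying contribution in the exact form $(\mathrm{const})\,e^{-at}\|x\|_X$ and relegates the accompanying $e^{-at}\|u\|_{\Uc}$ piece to the gain; linearity of $\rho$ then preserves that $e^{-at}\|x\|_X$ structure through the outer composition, yielding exp-ISS. If in addition the $\Uc_{c}$-gain $\gamma$ and $\eta$ are linear, every building block of $\hat\gamma$ is linear, hence $\hat\gamma$ is linear as well.
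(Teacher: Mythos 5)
Your proposal is correct and follows essentially the same route as the paper's own proof: the monotone sandwich $\phi(t,x_-,u_-)\le\phi(t,x,u)\le\phi(t,x_+,u_+)$, condition (i) plus the $\Uc_c$-ISS estimate, substitution of \eqref{GrindEQ__20_}--\eqref{GrindEQ__21_}, the limit $\varepsilon\to 0^+$, and the decoupling via $\theta(a+b)\le\theta(2a)+\theta(2b)$ and $\beta(r,t)\le\beta(r,0)$. The only differences are immaterial constants in the final $\KL$ and $\Kinf$ functions.
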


\begin{proof}
The "$\Rightarrow$" direction is evident, thus we show "$\Leftarrow$".
Let $(X,\Uc,\phi)$ be ISS w.r.t. inputs in $\Uc_{c}$.
Pick any $x \in X$, $u\in \Uc(x)$ and $\eps>0$ and let $x_-,x_+ \in X$ and $u_- \in \Uc(x_-)\bigcap \Uc_{c}, u_+ \in \Uc(x_+)\bigcap \Uc_{c}$ be initial states and constant in time inputs as in assumption (ii).
Due to monotonicity of $(X,\Uc,\phi)$ for any $t\geq 0$ it holds that 
\begin{eqnarray*}
\phi(t,x_-,u_-) \leq \phi(t,x,u) \leq \phi(t,x_+,u_+).
\end{eqnarray*}
In view of assumption (i) we have 
\begin{eqnarray*}
\|\phi(t,x,u)\|_X \leq \rho\big(\|\phi(t,x_-,u_-)\|_X + \|\phi(t,x_+,u_+)\|_X\big),
\end{eqnarray*}
and ISS of $(X,\Uc,\phi)$ w.r.t. inputs in $\Uc_{c}$ implies existence of $\beta \in \KL$ and $\gamma\in\Kinf$ so that 
\begin{eqnarray*}
\|\phi(t,x,u)\|_X &\leq& \rho\Big(\beta\big(\| x_- \|_X,t\big) + \gamma\big( \|u_-\|_{\Uc}\big)  + \beta\big(\| x_+ \|_X,t\big) + \gamma\big( \|u_+\|_{\Uc}\big)\Big)\\
&\leq& \rho\Big(2\beta\big(\xi(\| x \|_X + \|u\|_{\Uc}+\eps),t\big) + 2 \gamma \circ \eta\big( \|u\|_{\Uc} + \eps \big)\Big).
\end{eqnarray*}
Taking the limit $\eps\to +0$ (note that rhs of the previous inequality depends continuously on $\eps$ for any fixed $x,u,t$), we obtain
\begin{eqnarray*}
\|\phi(t,x,u)\|_X &\leq& \rho\Big(2\beta\big(\xi(\| x \|_X + \|u\|_{\Uc}),t\big) + 2 \gamma \circ \eta\big( \|u\|_{\Uc}\big)\Big)\\
&\leq& \rho\Big(2\beta\big(\xi(2\| x \|_X) + \xi(2\|u\|_{\Uc}),t\big) + 2 \gamma \circ \eta\big( \|u\|_{\Uc}\big)\Big)\\
&\leq& \rho\Big(2\beta\big(2\xi(2\| x \|_X),t\big) + 2\beta\big(2\xi(2\|u\|_{\Uc}),t\big) + 2 \gamma \circ \eta\big( \|u\|_{\Uc}\big)\Big)\\
&\leq& \rho\Big(2\beta\big(2\xi(2\| x \|_X),t\big) + 2\beta\big(2\xi(2\|u\|_{\Uc}),0\big) + 2 \gamma \circ \eta\big( \|u\|_{\Uc}\big)\Big)\\
&\leq& \rho\Big(4\beta\big(2\xi(2\| x \|_X),t\big)\Big) + \rho\Big(4\beta\big(2\xi(2\|u\|_{\Uc}),0\big) + 4 \gamma \circ \eta\big( \|u\|_{\Uc}\big)\Big)\\
&=:& \hat\beta\big(\| x \|_X,t\big) + \hat\gamma\big( \|u\|_{\Uc}\big),
\end{eqnarray*}
where $\hat\beta:(r,t) \mapsto \rho\Big(4\beta\big(2\xi(2r),t\big)\Big)$ is a $\KL$ function and $\hat\gamma(r):r \mapsto \rho\Big(4\beta\big(2\xi(2r),0\big) + 4 \gamma \circ \eta\big(r\big)\Big)$ is a $\Kinf$ function.
This shows ISS of $(X,\Uc,\phi)$. 

If $\rho$ and $\xi$ are linear, and $\Sigma$ is exp-ISS w.r.t. inputs in $\Uc_{c}$, then above argument justifies exp-ISS of $\Sigma$.
Clearly, if $\eta$ and $\gamma$ are linear, then also $\tilde\gamma$ is linear.
\end{proof}

\section{Input-to-state stability of nonlinear parabolic equations}
\label{ISS_nonlinear_parabolic_PDEs}

In this section we apply results from Section~\ref{sec:ISS_monotone_control_systems}
 to nonlinear parabolic equations with boundary inputs.

Let $G\subset \mathbb{R} ^{n} $ be an open bounded region, let $T>0$ be a constant and denote $D:=(0,T)\times G$. 
Let $CL(D)$ denote the class of functions $x\in C^{0} \left(\overline{D}\right)\bigcap C^{1,2} (D)$.
Denote for each $t\in[0,T]$ and each $x\in CL(D)$ a function $x[t]:\R_+ \to C^0(\overline{G})$ by $x[t]:=x(t,\cdot)$.

Consider the operator $L$ defined for a function $x\in CL(D)$ by
\begin{equation} 
\label{GrindEQ__1_}
 (Lx)(t,z):=\frac{\partial \, x}{\partial \, t} (t,z)-\sum _{i,j=1}^{n}a_{i,j} (z)\frac{\partial ^{2} \, x}{\partial \, z_{i} \, \partial \, z_{j} } (t,z) -f\big(z,x(t,z),\nabla x(t,z)\big), \ (t,z)\in D,
\end{equation} 
where $a_{i,j} \in C^{0} (\overline{G})$ for $i,j=1,...,n$ and $f:G\times \mathbb{R} \times \mathbb{R} ^{n} \to \mathbb{R} $ is a continuous function. The operator $L$ is called uniformly parabolic, if there exists a constant $K>0$ so that for all $\xi =(\xi _{1} ,...,\xi _{n} )\in \mathbb{R} ^{n} $ it holds that
\begin{equation} \label{GrindEQ__2_}
\sum _{i,j=1}^{n}a_{i,j} (z)\xi _{i} \xi _{j}  \ge K\left|\xi \right|^{2}  \quad \mbox{for all} \ z\in \overline{G}.
\end{equation}

We need the following modification of a classical comparison principle from \cite[Theorem 16, p. 52]{Fri83}.
\begin{proposition}
\label{prop:New_Comparison_Principle}
Let $L$ be uniformly parabolic. Assume that for every bounded set $W\subset \mathbb{R} $ there is a constant $k>0$ such that for every $w_1,w_2\in W$, $z\in G$, $\xi \in \mathbb{R} ^{n}$ with $w_1>w_2$ it holds that
\begin{equation} 
\label{GrindEQ__3_} 
f(z,w_1,\xi )-f(z,w_2,\xi ) < k(w_1-w_2).
\end{equation} 
Let $x,y\in CL(D)$ be so that
\begin{eqnarray}
(Ly)(t,z)&\ge& 0\ge (Lx)(t,z), \quad \mbox{for all} \ (t,z)\in D,\label{GrindEQ__4_}\\
y(0,z)&\ge& x(0,z), \quad \mbox{for all} \ z\in G, \label{GrindEQ__5_}\\
y(t,z)&\ge& x(t,z), \quad \mbox{for all} \ (t,z)\in [0,T]\times \partial G. \label{GrindEQ__6_}
\end{eqnarray}
Then $y(t,z)\ge x(t,z)$ for all $(t,z)\in \overline{D}$.
\end{proposition}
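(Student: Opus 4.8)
The plan is to reduce the claim to a maximum-principle argument for the difference $w := x - y$, for which I must show that $w \le 0$ on $\overline{D}$. Subtracting the two differential inequalities in \eqref{GrindEQ__4_} and unfolding the definition \eqref{GrindEQ__1_} of $L$, I obtain, at every $(t,z)\in D$,
\[
\frac{\partial w}{\partial t}(t,z) - \sum_{i,j=1}^{n} a_{i,j}(z)\frac{\partial^2 w}{\partial z_i\,\partial z_j}(t,z) \le f\big(z,x(t,z),\nabla x(t,z)\big) - f\big(z,y(t,z),\nabla y(t,z)\big),
\]
while \eqref{GrindEQ__5_}--\eqref{GrindEQ__6_} give $w\le 0$ on the parabolic boundary $(\{0\}\times G)\cup([0,T]\times\partial G)$. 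Since $x,y\in C^{0}(\overline{D})$ and $\overline{D}$ is compact, the values of $x$ and $y$ remain in a bounded set $W\subset\R$, so \eqref{GrindEQ__3_} furnishes a constant $k>0$ which I fix once and for all.

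Next I would argue by contradiction using an exponential shift. Fix $\varepsilon>0$ and $\lambda\ge k$, and set $\psi := w - \varepsilon e^{\lambda t}$; by the boundary bounds, $\psi<0$ on the whole parabolic boundary. Suppose $\psi\ge 0$ somewhere. I would work first on a slightly smaller cylinder $(0,T')\times G$ with $T'<T$ (to keep any touching point inside the open set $D$, where the derivatives of $x$ and $y$ are available), and let $t^{\ast}$ be the first time the spatial maximum $m(t):=\max_{z\in\overline{G}}\psi(t,z)$ reaches $0$. Continuity together with the strict negativity of $\psi$ on $\{0\}\times\overline{G}$ forces $t^{\ast}\in(0,T']$, and strict negativity on the lateral boundary forces the maximizer $z_0$ to be interior, $z_0\in G$; hence $(t^{\ast},z_0)\in D$ because $t^{\ast}\le T'<T$.

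The key observation is that at this interior spatial maximum the first-order conditions give $\nabla\psi(t^{\ast},z_0)=0$, hence $\nabla x(t^{\ast},z_0)=\nabla y(t^{\ast},z_0)$, so the gradient dependence of $f$ drops out and \eqref{GrindEQ__3_} becomes applicable. The second-order condition makes the spatial Hessian of $\psi$ negative semidefinite, which, together with uniform parabolicity \eqref{GrindEQ__2_} (only the symmetric part of $(a_{i,j})$ is relevant), yields $\sum_{i,j}a_{i,j}(z_0)\,\partial^2_{z_iz_j}\psi(t^{\ast},z_0)\le 0$; and minimality of $t^{\ast}$ gives $\partial_t\psi(t^{\ast},z_0)\ge 0$. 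Substituting $w=\psi+\varepsilon e^{\lambda t}$ into the displayed differential inequality at $(t^{\ast},z_0)$, the left-hand side is at least $\varepsilon\lambda e^{\lambda t^{\ast}}$, whereas, since $w(t^{\ast},z_0)=\varepsilon e^{\lambda t^{\ast}}>0$ and the gradients coincide, the right-hand side is strictly less than $k\,w(t^{\ast},z_0)=k\varepsilon e^{\lambda t^{\ast}}$ by \eqref{GrindEQ__3_}. This forces $\lambda<k$, contradicting $\lambda\ge k$. Hence $\psi<0$, i.e. $x-y<\varepsilon e^{\lambda t}$, throughout $[0,T']\times\overline{G}$; letting $\varepsilon\to 0^{+}$, then $T'\to T$, and invoking continuity on $\overline{D}$ gives $x\le y$ on $\overline{D}$.

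I expect the main obstacle to be precisely the gradient term: since \eqref{GrindEQ__3_} controls $f$ only in its value argument, one cannot directly compare $f(z,x,\nabla x)$ with $f(z,y,\nabla y)$ when $\nabla x\neq\nabla y$. The device that removes this difficulty is evaluating at an interior spatial extremum of the difference, where the gradients necessarily agree; the exponential weight $e^{\lambda t}$ with $\lambda\ge k$ is then exactly what is needed to absorb the one-sided Lipschitz constant $k$, and the passage $T'\to T$ is the routine technical point that handles the fact that the differential inequalities are assumed only on the open cylinder $D$.
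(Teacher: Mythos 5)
Your proof is correct, but it takes a more self-contained route than the paper. The paper does not re-prove the maximum principle from scratch: it rescales by setting $u=e^{-kt}y$ and $v=e^{-kt}x$, notes that the transformed nonlinearity $e^{-kt}f\big(z,e^{kt}u,e^{kt}p\big)-ku$ is strictly decreasing in $u$ on the relevant bounded range precisely because of \eqref{GrindEQ__3_}, adds the constant $\varepsilon\in\left(0,e^{-kT}\right]$ to $u$ so that all inequalities become strict, invokes the classical comparison theorem \cite[Theorem 16, p.~52]{Fri83} as a black box to get $u_\varepsilon>v$ on $D$, and finally lets $\varepsilon\to 0^{+}$. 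You instead carry out the underlying first-touching-point argument directly for $\psi=x-y-\varepsilon e^{\lambda t}$ with $\lambda\ge k$: the three observations that drive your contradiction --- the gradients of $x$ and $y$ coincide at an interior spatial maximum of the difference, so that \eqref{GrindEQ__3_} (which gives no control in the gradient slot) becomes usable; the elliptic term has the favorable sign by uniform parabolicity \eqref{GrindEQ__2_} applied to the negative semidefinite Hessian; and the exponential weight with rate $\lambda\ge k$ absorbs the one-sided Lipschitz constant --- are exactly the content of the cited Friedman theorem after the paper's change of variables, and your handling of the technical points (restricting to $T'<T$ so the touching point lies in the open cylinder where $x,y\in C^{1,2}$, then passing to the limits $\varepsilon\to 0^{+}$ and $T'\to T$ by continuity on $\overline{D}$) is sound and mirrors the paper's own $\varepsilon\to 0$ step. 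What the paper's route buys is brevity and a clean delegation of the maximum-principle bookkeeping to a standard reference; what yours buys is a proof that is elementary and independent of the precise hypotheses of \cite[Theorem 16, p.~52]{Fri83} --- in particular, since the gradient dependence of $f$ drops out at the touching point, you never need more than continuity of $f$ in $\xi$, whereas the citation route implicitly leans on the regularity in $\xi$ that the paper supplies separately in assumption (H2).
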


\begin{proof}
Since $y\in C^{0} \left(\overline{D}\right)$ and since $\overline{D}$ is compact, there exist constants $m\le M$ such that
\begin{equation} 
\label{GrindEQ__7_}
m\le y(t,z)\le M, \quad \mbox{for all} \ (t,z)\in \overline{D}.
\end{equation}

By assumption, there exists a constant $k>0$ such that for every $w_1,w_2\in [m,M+1]$, $(t,z)\in D$, $\xi \in \mathbb{R} ^{n} $ with $w_1>w_2$ inequality \eqref{GrindEQ__3_} holds. Consequently, it follows that for every $(t,z)\in D$ and for every $\varepsilon \in \left(0,e^{-kT}\right]$ the following inequality holds:
\begin{equation} \label{GrindEQ__8_} 
f\big(z,y(t,z)+\varepsilon e^{kt},\nabla y(t,z)\big)-ke^{kt}\varepsilon <f\big(z,y(t,z),\nabla y(t,z)\big). 
\end{equation} 

Next consider the functions $u,v\in CL(D)$ defined for all $(t,z)\in \overline{D}$ by
\begin{equation} \label{GrindEQ__9_}
\begin{array}{l} {u(t,z)=e^{-kt}y(t,z), \qquad v(t,z)=e^{-kt}x(t,z).} 
\end{array}                             
\end{equation}

It follows from the definitions \eqref{GrindEQ__9_} and relations \eqref{GrindEQ__4_}, \eqref{GrindEQ__5_}, \eqref{GrindEQ__6_} that the following inequalities hold:
\begin{eqnarray}
u(0,z)&\ge& v(0,z), \quad \mbox{for all} \ z\in G, \label{GrindEQ__10_}\\
u(t,z)&\ge& v(t,z), \quad \mbox{for all} \ (t,z)\in [0,T]\times \partial G. \label{GrindEQ__11_}
\end{eqnarray}
Using \eqref{GrindEQ__1_} and the assumption \eqref{GrindEQ__4_}, we obtain that for all $(t,z)\in D$ it holds that
\begin{eqnarray}
\label{GrindEQ__12_}
\frac{\partial \, u}{\partial \, t} (t,z) &=& -k u(t,z) + e^{-kt} \frac{\partial \, y}{\partial \, t} (t,z) \nonumber\\
&\geq & e^{-kt} \sum _{i,j=1}^{n}a_{i,j} (z)\frac{\partial ^{2} \, y}{\partial \, z_{i} \, \partial \, z_{j} } (t,z) 
+e^{-kt}f\left(z,y(t,z),\nabla y(t,z)\right) -k u(t,z) \nonumber\\
& = & \sum _{i,j=1}^{n}a_{i,j} (z)\frac{\partial ^{2} \, u}{\partial \, z_{i} \, \partial \, z_{j} } (t,z) 
+e^{-kt}f\left(z,e^{kt}u(t,z),e^{kt}\nabla u(t,z)\right) -k u(t,z).
\end{eqnarray}

Analogously, we obtain for all $(t,z)\in D$ that
\begin{equation} 
\label{GrindEQ__13_}
\frac{\partial \, v}{\partial \, t} (t,z)\le \sum _{i,j=1}^{n}a_{i,j} (z)\frac{\partial ^{2} \, v}{\partial \, z_{i} \, \partial \, z_{j} } (t,z) +e^{-kt}f\left(z,e^{kt}v(t,z),e^{kt}\nabla v(t,z)\right)-kv(t,z).
\end{equation}                                              
Finally, define for every $\varepsilon \in \left(0,e^{-kT}\right]$ the function $u_{\varepsilon } \in CL(D)$ by means of the formula:
\begin{equation} \label{GrindEQ__14_}
u_{\varepsilon } (t,z)=u(t,z)+\varepsilon , \quad \mbox{for all} \ (t,z)\in \overline{D}.
\end{equation}
It follows from definition \eqref{GrindEQ__14_} and inequalities \eqref{GrindEQ__10_}, \eqref{GrindEQ__11_} that the following inequalities hold for all $\varepsilon \in \left(0,e^{-kT}\right]$:
\begin{eqnarray}
u_{\varepsilon } (0,z)&>&v(0,z), \quad \mbox{for all} \ z\in G, \label{GrindEQ__15_}\\
u_{\varepsilon } (t,z)&>&v(t,z), \quad \mbox{for all} \ (t,z)\in [0,T]\times \partial G. \label{GrindEQ__16_}
\end{eqnarray}
Using definitions \eqref{GrindEQ__9_}, \eqref{GrindEQ__14_} and inequalities \eqref{GrindEQ__8_}, \eqref{GrindEQ__12_} we get for all $\varepsilon \in \left(0,e^{-kT}\right]$ and all $(t,z)\in D$:
\begin{eqnarray}
\frac{\partial \, u_{\varepsilon } }{\partial \, t} (t,z) &=& \frac{\partial \, u}{\partial \, t} (t,z) \nonumber\\
&\geq& \sum _{i,j=1}^{n}a_{i,j} (z)\frac{\partial ^{2} \, u}{\partial \, z_{i} \, \partial \, z_{j} } (t,z) 
+e^{-kt}f\big(z,e^{kt}u(t,z),e^{kt}\nabla u(t,z)\big) -k u(t,z) \nonumber\\
&=& \sum _{i,j=1}^{n}a_{i,j} (z)\frac{\partial ^{2} \, u_{\varepsilon }}{\partial \, z_{i} \, \partial \, z_{j} } (t,z) 
{+}e^{-kt}f\big(z,e^{kt}u(t,z),e^{kt}\nabla u_{\varepsilon }(t,z)\big) {-}k (u_{\varepsilon }(t,z){-}\eps) \nonumber\\
&>& \sum _{i,j=1}^{n}a_{i,j} (z)\frac{\partial ^{2} \, u_{\varepsilon }}{\partial \, z_{i} \, \partial \, z_{j} } (t,z) 
+e^{-kt}\Big(f\big(z,e^{kt}u(t,z),e^{kt}\nabla u_{\varepsilon }(t,z)\big)-ke^{kt}\eps \Big) \nonumber\\
&&\qquad\qquad\qquad\qquad\qquad\qquad\qquad\qquad\qquad\qquad\qquad\qquad -k (u_{\varepsilon }(t,z)-\eps) \nonumber\\
\label{GrindEQ__17_} &=&  \sum _{i,j=1}^{n}a_{i,j} (z)\frac{\partial ^{2} \, u_{\varepsilon } }{\partial \, z_{i} \, \partial \, z_{j} } (t,z) +e^{-kt}f\big(z,e^{kt}u_{\varepsilon } (t,z),e^{kt}\nabla u_{\varepsilon } (t,z)\big)-ku_{\varepsilon } (t,z).
\end{eqnarray}

 Using inequalities \eqref{GrindEQ__13_}, \eqref{GrindEQ__15_}, \eqref{GrindEQ__16_}, \eqref{GrindEQ__17_} and the fact that $u_{\varepsilon } ,v\in CL(D)$, it follows from 
\cite[Theorem 16, p. 52]{Fri83} for every $\varepsilon \in \left(0,e^{-kT}\right]$ that:
\begin{equation} 
\label{GrindEQ__18_}
u_{\varepsilon } (t,z)>v(t,z), \quad \mbox{for all} \ (t,z)\in D.
\end{equation}

 Inequality $y(t,z)\ge x(t,z)$ for all $(t,z)\in \overline{D}$ is a direct consequence of definitions \eqref{GrindEQ__9_}, \eqref{GrindEQ__14_}, inequalities \eqref{GrindEQ__5_}, \eqref{GrindEQ__6_}, \eqref{GrindEQ__18_} and continuity of the functions $x,y\in CL(D)$ on $\overline{D}$. 
\end{proof}

Since our intention is to analyze forward complete systems, we introduce some more notation.
Let $CL$ denote the class of functions $x\in C^{0} \left(\mathbb{R} _{+} \times \overline{G}\right)\bigcap C^{1,2} ((0,+\infty )\times G)$.

Now we apply the established results to analyze the initial boundary value problem:
\begin{eqnarray}
(Lx)(t,z)&=&0, \quad \mbox{for all }(t,z)\in (0,+\infty )\times G, \label{GrindEQ__22_}\\
x(0,z)&=&x_{0} (z), \quad \mbox{for all }z\in G, \label{GrindEQ__23_}\\
x(t,z)&=&u(t,z), \quad \mbox{for all }(t,z)\in \mathbb{R} _{+} \times \partial G,  \label{GrindEQ__24_}
\end{eqnarray}
where $x_{0} \in C^{0} (\overline{G})$, $L$ is the uniformly parabolic operator defined by \eqref{GrindEQ__1_} with $f\in C^{0} (\overline{G}\times \mathbb{R} \times \mathbb{R} ^{n} )$.

In this section we assume that the space of input values is $U=C^{0} (\partial G)$, endowed with the standard sup-norm
and that 
$\mathcal{U}:=\left\{\, u\in C^{0} (\mathbb{R} _{+} ;U):\, \, u \mbox{ is bounded}\right\}$, endowed with
\begin{itemize}
  \item the partial order $\le $ for which $u\le v$ iff $u(t,z)\le v(t,z)$ for all $(t,z)\in \mathbb{R} _{+} \times \partial G$,
 \item the norm 
$\left\| u\right\| _{\mathcal{U}} =\mathop{\sup }\limits_{t\ge 0} \left\| u[t]\right\| _{U} =\mathop{\sup }\limits_{z\in \partial G,\ t\ge 0} \left|u(t,z)\right|$. 
\end{itemize}
In the sequel we will need also a subspace of $\Uc$ consisting of constant in time and space inputs:
\begin{eqnarray}
\Uc_{c}:=\{u \in \Uc:\exists k \in\R:\ u(t,z)=k \ \forall (t,z)\in \mathbb{R} _{+} \times \partial G\}.
\label{eq:Uc_const_def}
\end{eqnarray}

Define for $x\in C^0(G)$ the standard $L^p(G)$-norm as $\left\|x \right\| _{p}:=\Big(\int_G |x(z)|^p dz\Big)^{1/p}$.
The Euclidean distance between $w\in G$ and $W \subset \overline{G}$ is denoted by $\rho(w,W):=\inf_{s\in W}|w-s|$.

The following assumptions are instrumental in what follows.
\begin{itemize}
   \item[(H1)] There exists a linear space $X\subseteq C^{0} (\overline{G})$, containing the functions $\{x\in C^{0} (\overline{G}):\exists k\in\R \mbox{ s.t. } x(\cdot)=k\}$, such that for each $x_{0} \in X$ which is constant on $\partial G$, there exists a set of inputs $\Uc(x_{0} )\subseteq \left\{\, v\in \Uc\, :\, v(0,z)=x_{0} (z)\, \mbox{for}\, \, z\in \partial G\, \right\}$, which contains constant in time and space inputs
\[
\left\{\, v\in \Uc\, :\, v(t,z)=x_{0} (z)\, \mbox{for}\, \, z\in \partial G\, \mbox{and}\, \, t\ge 0\, \right\}
\] 
with the following property: for every $x_{0} \in X$, $u\in \Uc(x_{0} )$ there exists a solution $x\in CL$ of the initial boundary 
value problem \eqref{GrindEQ__22_}, \eqref{GrindEQ__23_}, \eqref{GrindEQ__24_} for which $x[t]\in X$ for all $t\ge 0$.  

 \item[(H2)] Assume that for every bounded set $W\subset \mathbb{R} $ there exists a constant $k>0$ such that for every $w_1,w_2\in W$, $(t,z)\in D$, $\xi \in \mathbb{R} ^{n} $ with $w_1>w_2$ inequality \eqref{GrindEQ__3_} holds. Moreover, the function $\overline{G}\times \mathbb{R} \times \mathbb{R} ^{n} \ni (z,w,\xi )\mapsto f(z,w,\xi )\in \mathbb{R} $ is continuously differentiable with respect to $w\in \mathbb{R} $ and $\xi \in \mathbb{R} ^{n} $.

 \item[(H3)] For every $\delta >0$, $a\in \mathbb{R} $, $x \in X$ there exists a continuous function $k:\overline{G}\to [0,1]$ with $k(z)=1$ for $z\in \partial G$, $k(z)=0$ for all $z\in G$ with $\rho(z,\partial G)\ge \delta $ and such that $f\in X$ where $f(z)=(1-k(z))x(z)+a\, k(z)$.
\end{itemize}

\begin{remark}\em
Note that (H3) is a condition on the geometry of the boundary of $G$ which is automatically satisfied when $G$ is an open interval in $\R$.
\end{remark}

We equip $X$ in (H1) with the partial order $\le $ for which $x\le y$ iff $x(z)\le y(z)$ for all $z\in \overline{G}$.
For existence theorems, which can be used to verify (H1), we refer to \cite[Chapters 3, 7]{Fri83}.

The next result assures monotonicity of the initial boundary value problem \eqref{GrindEQ__22_}, \eqref{GrindEQ__23_}, \eqref{GrindEQ__24_}.
\begin{theorem}
\label{thm:Monotonicity_Parabolic_Systems}
Suppose that assumptions (H1), (H2) hold and let $p\in[1,+\infty]$. Let us endow the linear space $X$ in (H1) with the standard $L^{p} (G)$-norm, which we denote by $\left\| x\right\| _{p}$.
Then:
\begin{itemize}
   \item[(i)] Initial boundary value problem \eqref{GrindEQ__22_}, \eqref{GrindEQ__23_}, \eqref{GrindEQ__24_} gives rise to
 the monotone control system $\Sigma=\left(X,\mathcal{U},\phi \right)$, where $\phi$ is the solution map of \eqref{GrindEQ__22_}, \eqref{GrindEQ__23_}, \eqref{GrindEQ__24_}.
 \item[(ii)] If additionally (H3) holds, then conditions (i) and (ii) of Proposition~\ref{prop:ISS_Monotone_states_inputs_Systems} hold with $\rho$, $\eta$, $\zeta$ being linear functions and $\Uc_{c}$ given by \eqref{eq:Uc_const_def}.  
\end{itemize}
\end{theorem}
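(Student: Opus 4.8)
The plan is to establish the two items separately, with the comparison principle (Proposition~\ref{prop:New_Comparison_Principle}) doing the structural work in item (i) and assumption (H3) doing the constructive work in item (ii). For item (i) I would first check that $\phi$ is well-defined: existence of a global solution $x\in CL$ with $x[t]\in X$ for all $t\ge0$ is exactly what (H1) supplies, and uniqueness follows by applying Proposition~\ref{prop:New_Comparison_Principle} to any two solutions $x,y$ sharing the same data. Indeed, since $Lx=Ly=0$, both of the ordered hypotheses $Ly\ge0\ge Lx$ and $Lx\ge0\ge Ly$ hold; combined with the equality of initial values \eqref{GrindEQ__5_} and boundary values \eqref{GrindEQ__6_} this gives $x\le y$ and $y\le x$, hence $x=y$. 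Setting $\phi(t,x_0,u):=x[t]$, the identity property is the initial condition \eqref{GrindEQ__23_}; causality follows from uniqueness of the solution on $(0,t)\times G$; and the cocycle property follows because the time-shifted tail $x(t+\cdot,\cdot)$ solves the initial boundary value problem started at $\phi(t,x_0,u)$ with boundary datum $u(t+\cdot)$ — the compatibility $u(t,z)=x(t,z)$ on $\partial G$ being inherited from \eqref{GrindEQ__24_} — so uniqueness identifies it with $\phi(\cdot,\phi(t,x_0,u),u(t+\cdot))$, the admissibility of the shift being arranged by the choice of the family $\Uc(\cdot)$ in (H1).

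Monotonicity of $\Sigma$ is then an immediate single application of the comparison principle. Given $x_1\le x_2$ in $X$ and $u_1\in\Uc(x_1)$, $u_2\in\Uc(x_2)$ with $u_1\le u_2$, let $x$ solve the problem for $(x_1,u_1)$ and $y$ for $(x_2,u_2)$, and apply Proposition~\ref{prop:New_Comparison_Principle} on every cylinder $(0,T)\times G$. The three hypotheses \eqref{GrindEQ__4_}--\eqref{GrindEQ__6_} hold because $Lx=Ly=0$, because $x_1\le x_2$ on $G$, and because $u_1\le u_2$ on $\partial G$; the conclusion $y\ge x$ on $\overline D$, valid for every $T$, yields $\phi(t,x_1,u_1)\le\phi(t,x_2,u_2)$ for all $t\ge0$. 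Thus $\Sigma$ is monotone with respect to states and inputs, proving item (i).

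For item (ii), condition (i) of Proposition~\ref{prop:ISS_Monotone_states_inputs_Systems} is elementary: if $x_-\le x\le x_+$ pointwise on $\overline G$, then $|x(z)|\le\max\{|x_-(z)|,|x_+(z)|\}\le|x_-(z)|+|x_+(z)|$ for every $z$, so the triangle inequality for $\|\cdot\|_p$ gives $\|x\|_p\le\|x_-\|_p+\|x_+\|_p$, i.e.\ $\rho=\mathrm{id}$ is admissible and linear (for $p=\infty$ the same pointwise bound works with the essential supremum). For condition (ii) I would construct the bounding data explicitly. Fix $x\in X$, $u\in\Uc(x)$, $\varepsilon>0$ and put $c_\pm:=\pm(\|u\|_{\Uc}+\varepsilon)$. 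Take $u_\pm\equiv c_\pm$, the constant-in-time-and-space inputs; then $u_-\le u\le u_+$ since $|u(t,z)|\le\|u\|_{\Uc}$, and $\|u_\pm\|_{\Uc}=\|u\|_{\Uc}+\varepsilon$, so \eqref{GrindEQ__20_} holds with $\eta=\mathrm{id}$. To make $u_\pm$ admissible I must produce states $x_\pm\in X$ that are constant on $\partial G$ with boundary value $c_\pm$ (so the compatibility condition of (H1) gives $u_\pm\in\Uc(x_\pm)\cap\Uc_c$), while keeping $x_-\le x\le x_+$. Here the compatibility $u(0,z)=x(z)$ on $\partial G$ yields $-\|u\|_{\Uc}\le x\le\|u\|_{\Uc}$ on $\partial G$, hence the \emph{strict} inequalities $x<c_+$ and $x>c_-$ on the compact set $\partial G$; by continuity of $x$ these persist on a neighbourhood $\{z:\rho(z,\partial G)<\delta\}$ for $\delta$ small enough (two disjoint compacta have positive distance). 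Applying (H3) with this $\delta$ and $a=c_+$ produces $k$ and $x_+:=(1-k)x+c_+k\in X$, which equals $c_+$ on $\partial G$ and satisfies $x_+-x=k(c_+-x)\ge0$ because $k$ is supported where $x<c_+$; symmetrically $x_-:=(1-k')x+c_-k'\le x$. The pointwise bound $|x_\pm(z)|\le|x(z)|+|c_\pm|$ then gives $\|x_\pm\|_p\le\|x\|_p+(\|u\|_{\Uc}+\varepsilon)\mu(G)^{1/p}$, so \eqref{GrindEQ__21_} holds with the linear $\xi(r)=\max\{1,\mu(G)^{1/p}\}\,r$.

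The main obstacle is precisely this last construction. One cannot take $x_\pm$ to be the pure constants $c_\pm$ (that would destroy $x\le x_+$ in the interior, where $x$ may be large), nor leave $x$ unchanged (it need not be constant on $\partial G$, so $u_\pm$ would fail to be admissible). The device resolving this tension is the $\varepsilon$-slack: it upgrades the weak boundary bound $x\le\|u\|_{\Uc}$ to a strict one, which is exactly what lets the localized modification furnished by (H3) lift $x$ to the constant value $c_+$ on $\partial G$ without ever decreasing it. This is also the reason the limit $\varepsilon\to+0$ appears in the proof of Proposition~\ref{prop:ISS_Monotone_states_inputs_Systems}.
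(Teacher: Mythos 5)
Your proposal is correct and follows essentially the same route as the paper: the comparison principle yields monotonicity of $\Sigma$, Minkowski's inequality gives condition (i) with $\rho=\mathrm{id}$, and the (H3)-based interpolation $(1-k)x\pm(\|u\|_{\Uc}+\varepsilon)k$ together with the $\varepsilon$-slack near $\partial G$ gives condition (ii) with linear $\eta$ and $\xi$. The only (harmless) deviation is that you obtain uniqueness by applying the comparison principle in both directions, whereas the paper invokes the differentiability of $f$ from (H2) together with a uniqueness theorem of Friedman.
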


\begin{proof}
We start by proving claim \textit{(i)}.
Pick any $x_{0} \in X$, $u\in \Uc(x_{0})$. Uniqueness of a corresponding solution of the initial boundary value problem \eqref{GrindEQ__22_}, \eqref{GrindEQ__23_}, \eqref{GrindEQ__24_} (which we denote by $\phi(\cdot,x_0,u)$) follows from the fact that the function $\overline{G}\times \mathbb{R} \times \mathbb{R} ^{n} \ni (z,x,\xi )\mapsto f(z,x,\xi )\in \mathbb{R} $ is continuously differentiable with respect to $(x,\xi )\in \mathbb{R} \times \mathbb{R} ^{n} $ and 
\cite[Theorem 8, p. 41]{Fri83}.

Exploiting uniqueness, one can directly show that properties ($\Sigma$\ref{axiom:Causality}) and ($\Sigma$\ref{axiom:Cocycle}) of Definition 2.1 hold for a triple $\Sigma=(X,\Uc,\phi)$. 
Therefore, $\Sigma$ is a control system. 
The fact that $\Sigma$ is a monotone control system is a direct consequence of Assumption (H2), the fact that $x\in CL$ and Proposition~\ref{prop:New_Comparison_Principle}.

\textit{(ii).} Assume that (H3) holds. Next we show that conditions (i) and (ii) of Proposition~\ref{prop:ISS_Monotone_states_inputs_Systems} hold.

Let $x, x_{+}, x_{-} \in X$ with $x_{-} (z)\le x(z)\le x_{+} (z)$ for all $z\in \overline{G}$ be given. 
Therefore, we get $\left|x(z)\right|\le \left|x_{-} (z)\right|+\left|x_{+} (z)\right|$ for all $z\in \overline{G}$. 
Due to Minkowski's inequality, 
\[
\left\| x \right\| _{p} \le \left\| x_{-} \right\| _{p} +\left\| x_{+} \right\| _{p}, \text{ for every } 1\le p\le \infty. 
\]
Therefore, condition (i) of Proposition~\ref{prop:ISS_Monotone_states_inputs_Systems} holds with $\rho (s):=s$ for $s\ge 0$. 

Let $\varepsilon >0$, $x\in X$ and $u\in \Uc(x)$ be given. Define:
\begin{equation} \label{GrindEQ__25_}
u_{-} (t,z):=-\left\| u\right\| _{\mathcal{U}} -\varepsilon,\qquad u_{+} (t,z):=\left\| u\right\| _{\mathcal{U}} +\varepsilon , \quad \mbox{for all }(t,z)\in \mathbb{R} _{+} \times \partial G.              
\end{equation}

 Since $\left\| u\right\| _{\mathcal{U}} =\mathop{\sup }\limits_{z\in \partial G,\ t\ge 0} \left|u(t,z)\right|$, it follows that $u_{-} \le u\le u_{+} $. Definitions \eqref{GrindEQ__25_} imply that inequality \eqref{GrindEQ__20_} holds with $\eta (s):=s$ for $s\ge 0$.

 Since $u\in \Uc(x)$ and since $\Uc(x)\subseteq \left\{v\in C^{0} (\mathbb{R} _{+} \times \partial G)\, :\, v(0,z)=x(z)\, \mbox{for}\, \, z\in \partial G \right\}$, it follows that $u(0,z)=x(z)$ for all $z\in \partial G$. Thus we have:
\begin{equation} \label{GrindEQ__26_}
-\left\| u\right\| _{\mathcal{U}} \le u(0,z)=x(z)\le \left\| u\right\| _{\mathcal{U}} , \quad \mbox{for all }z\in \partial G.                               
\end{equation}

It follows from \eqref{GrindEQ__26_}, compactness of $\overline{G}$ and continuity of $x\in X$ that for every $\varepsilon >0$ there exists $\delta >0$ such that
\begin{equation} \label{GrindEQ__27_} 
z\in G \ , \ \rho(z,\partial G)<\delta \quad \Rightarrow \quad -\left\| u\right\| _{\mathcal{U}} -\varepsilon \le x(z)\le \left\| u\right\| _{\mathcal{U}} +\varepsilon.
\end{equation}

 By virtue of Assumption (H3), there exists $k:\overline{G}\to [0,1]$ with $k(z)=1$ for $z\in \partial G$, $k(z)=0$ for all $z\in G$ with $\rho(z,\partial G)\ge \delta $ and such that $x_{-} \in X$, where 
\begin{equation} \label{GrindEQ__28_} 
x_{-} (z):=\big(1-k(z)\big)x(z)-\left(\left\| u\right\| _{\mathcal{U}} +\varepsilon \right)\, k(z). 
\end{equation} 

Implication \eqref{GrindEQ__27_}, definition \eqref{GrindEQ__28_}, the fact that $k(z)=0$ for $z\in G$ with $\rho(z,\partial G)\ge \delta $  and the fact that $k(z)\ge 0$ for all $z\in \overline{G}$ imply that 
\[
x_-(z) \leq \big(1-k(z)\big)x(z) + x(z) k(z) = x(z),\quad z\in \overline{G}.
\]
Moreover, definition \eqref{GrindEQ__28_} in conjunction with the fact that $k(z)\in [0,1]$ for all $z\in \overline{G}$ implies that
\begin{equation} \label{GrindEQ__29_} 
\left\| x_{-} \right\| _{p} \le \left\| x\right\| _{p} +\big(\left\| u\right\| _{\mathcal{U}} +\varepsilon \big)(\mu(G))^{1/p}, \mbox{ for every}\ 1\le p<\infty
\end{equation}
where $\mu(G)$ is the Lebesgue measure of $G$. Also it holds that
\begin{equation} \label{GrindEQ__30_} 
\left\| x_{-} \right\| _{\infty } \le \left\| x\right\| _{\infty } +\left\| u\right\| _{\mathcal{U}} +\varepsilon.
\end{equation} 
Furthermore, definitions \eqref{GrindEQ__25_}, \eqref{GrindEQ__28_} and the fact that $k(z)=1$ for $z\in \partial G$ imply that $u_{-} (t,z)=x_{-} (z) = -(\|u\|_{\Uc}+\eps)$ for all $(t,z)\in \mathbb{R} _{+} \times \partial G$. By virtue of Assumption (H1) we have 
\[
\left\{\, v\in C^{0} (\mathbb{R} _{+} \times \partial G)\, :\, v(t,z)=x_{-} (z)\, \mbox{for}\, \, z\in \partial G\, \mbox{and}\, \, t\ge 0\, \right\}\subseteq \Uc(x_{-} )
\]  
which implies $u_{-} \in \Uc(x_{-} )$. 

Analogously, there exists $l:\overline{G}\to [0,1]$ with $l(z)=1$ for $z\in \partial G$, $l(z)=0$ for all $z\in G$ with $\rho(z,\partial G)\ge \delta $ and such that $x_{+} \in X$, where 
\begin{equation} \label{GrindEQ__31_} 
x_{+} (z):=\big(1-l(z)\big)x(z)+\left(\left\| u\right\| _{\mathcal{U}} +\varepsilon \right)\, l(z).
\end{equation} 
and  $x_+$ satisfies the estimates
\begin{equation} \label{GrindEQ__32_}
\left\| x_{+} \right\| _{p} \le \left\| x\right\| _{p} +\left(\left\| u\right\| _{\mathcal{U}} +\varepsilon \right)(\mu(G))^{1/p},\quad \mbox{for every} \ 1\le p<\infty
\end{equation} 
and
\begin{equation} \label{GrindEQ__33_} 
\left\| x_{+} \right\| _{\infty } \le \left\| x\right\| _{\infty } +\left\| u\right\| _{\mathcal{U}} +\varepsilon,
\end{equation} 
and as above one verifies that $u_{+} \in \Uc(x_{+} )$.


Inequalities \eqref{GrindEQ__29_}, \eqref{GrindEQ__30_}, \eqref{GrindEQ__32_}, \eqref{GrindEQ__33_} imply that \eqref{GrindEQ__21_} holds with $\xi (s):=\big(1+(\mu(G))^{1/p} \big) s$ for $1\le p<\infty $, $s\ge 0$ and $\zeta (s):= s$ for $p=\infty $, $s\ge 0$.  
The proof is complete. 
\end{proof}

We continue to assume that the axioms (H1) and (H2) hold and that $X$ is as in (H1).

Consider now the following equations: 
\begin{eqnarray}
\frac{\partial y}{\partial t}(t,z) - \sum_{i,j=1}^n a_{ij}(z)\frac{\partial^2 y}{\partial z_i \partial z_j}(t,z) -
f\Big(z,y(t,z)+v(t,z), \nabla y(t,z)\Big) =0,
\label{eq:Nonlinear_Parabolic_Equation_transformed}
\end{eqnarray}
where $t >0$, $z \in G$, together with homogeneous Dirichlet boundary conditions
\begin{eqnarray}
y(t,z) = 0, \quad z \in \partial G,\ t \geq 0.
\label{eq:Dirichlet_transformed}
\end{eqnarray}
The state space of \eqref{eq:Nonlinear_Parabolic_Equation_transformed} is a linear space
\[
Y:=\{y\in X: y(z)=0,\ z\in\partial{G}\}
\]
and the input $v$ belongs to the space of constant in time and space inputs
\[
\Vc:=\{v\in C^0(\R_+\times \overline{G},\R): \exists k\in\R: v(t,z)=k,\ (t,z)\in\R_+\times \overline{G}\}.
\]
Denote the solutions of \eqref{eq:Nonlinear_Parabolic_Equation_transformed}, \eqref{eq:Dirichlet_transformed}, corresponding to the initial condition $y \in Y$ and input $v$ by $\phi_y(t,y,v)$.
It is easy to see that for any $x\in X$ and any $v \in \Vc$ for which $v|_{\partial G} \in \Uc(x)$  it holds that
\begin{eqnarray}
\phi_y(t,y,v)=\phi(t,x,v|_{\partial G}) -v,\quad \mbox{where}\ y=x-v,
\label{eq:phi_phyy_relation}
\end{eqnarray}
and $y \in Y$ since $v\in X$, $X$ is a linear space and $x(z)=v(z)$ for $z\in\partial G$.

Now we are able to prove our main result, showing that ISS of nonlinear parabolic systems w.r.t. a boundary input can be reduced to the problem of ISS of a parabolic system with a distributed and constant input, which is conceptually much simpler.
\begin{theorem}
\label{thm:ISS_Parabolic_Systems_Characterization}
Suppose that assumptions (H1), (H2) and (H3) hold and let $p\in[1,+\infty]$. Let us endow the linear spaces $X,Y$ with the norm $\left\| \cdot \right\| _{p}$. The following statements are equivalent:

\begin{itemize}
 \item[(i)] The system \eqref{GrindEQ__22_}, \eqref{GrindEQ__23_}, \eqref{GrindEQ__24_}  with the state space $X$ is ISS w.r.t. inputs of class $\Uc$.
 \item[(ii)] The system \eqref{GrindEQ__22_}, \eqref{GrindEQ__23_}, \eqref{GrindEQ__24_} with the state space $X$ is ISS w.r.t. constant in time and space inputs of class $\Uc$.
 \item[(iii)] The system \eqref{eq:Nonlinear_Parabolic_Equation_transformed}, \eqref{eq:Dirichlet_transformed} 
 with the state space $Y$ is ISS w.r.t. inputs in $\Vc$.
\end{itemize}
\end{theorem}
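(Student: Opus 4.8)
The plan is to prove the chain of equivalences
$(i)\Leftrightarrow(ii)\Leftrightarrow(iii)$, using the abstract results from
Section~\ref{sec:ISS_monotone_control_systems} to handle
$(i)\Leftrightarrow(ii)$ and the coordinate transformation
\eqref{eq:phi_phyy_relation} to handle $(ii)\Leftrightarrow(iii)$.

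\textbf{Step 1: $(i)\Leftrightarrow(ii)$.}
By Theorem~\ref{thm:Monotonicity_Parabolic_Systems}\emph{(i)}, the initial
boundary value problem \eqref{GrindEQ__22_}, \eqref{GrindEQ__23_},
\eqref{GrindEQ__24_} defines a monotone control system
$\Sigma=(X,\Uc,\phi)$, which is in particular monotone with respect to both
states and inputs. By Theorem~\ref{thm:Monotonicity_Parabolic_Systems}\emph{(ii)},
assumptions (i) and (ii) of Proposition~\ref{prop:ISS_Monotone_states_inputs_Systems}
hold with $\Uc_c$ given by \eqref{eq:Uc_const_def} (the constant in time and
space inputs), with $\rho$, $\eta$, $\xi$ linear. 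Since
$\Uc_c\subset\Uc$, Proposition~\ref{prop:ISS_Monotone_states_inputs_Systems}
applies directly and yields that $\Sigma$ is ISS if and only if it is ISS
w.r.t. inputs in $\Uc_c$. This is precisely the equivalence of statements
$(i)$ and $(ii)$.

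\textbf{Step 2: $(ii)\Leftrightarrow(iii)$.}
Here I would exploit the change of variables $y=x-v$ recorded in
\eqref{eq:phi_phyy_relation}. For a constant-in-time-and-space input
$u\in\Uc_c$ determined by a constant $k\in\R$, take $v\in\Vc$ to be the
corresponding constant function $v\equiv k$ on $\R_+\times\overline{G}$, so
that $v|_{\partial G}=u$ and $\|v\|_{\Vc}=|k|=\|u\|_{\Uc}$. The relation
$\phi_y(t,y,v)=\phi(t,x,v|_{\partial G})-v$ then lets me convert an ISS
estimate for one system into an ISS estimate for the other, up to the norm
shift caused by subtracting the constant $v$. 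Concretely, since $v$ is
constant, $\|v\|_p=|k|\,(\mu(G))^{1/p}$ for $p<\infty$ and $\|v\|_\infty=|k|$,
so by the triangle inequality in $L^p(G)$ one controls
$\|\phi(t,x,u)\|_p$ by $\|\phi_y(t,y,v)\|_p+\|v\|_p$ and vice versa, with the
extra terms being of class $\K$ in $\|u\|_{\Uc}=\|v\|_{\Vc}$. Passing the
$\KL$ and $\K$ functions through these bounds (and noting that
$\|x\|_p\le\|y\|_p+\|v\|_p$ and $\|y\|_p\le\|x\|_p+\|v\|_p$ to match initial
states) produces the desired ISS estimate for the other system. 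This
establishes $(ii)\Leftrightarrow(iii)$, and combining with Step~1 closes the
loop.

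\textbf{Main obstacle.}
The routine part is Step~1, which is essentially a citation of the two
previously established theorems. The delicate bookkeeping is in Step~2:
one must carefully track how the finite additive constant $\|v\|_p$ (which
scales like $(\mu(G))^{1/p}\|u\|_{\Uc}$) enters each estimate, ensure that the
admissibility condition $v|_{\partial G}\in\Uc(x)$ holds so that
\eqref{eq:phi_phyy_relation} is applicable for the relevant $x$ and $v$, and
verify that the resulting comparison functions remain of the correct classes
$\KL$ and $\K$ after absorbing these constant shifts. The admissibility point
is guaranteed by (H1), which explicitly places the constant-in-time-and-space
inputs into $\Uc(x_0)$ whenever $x_0$ is constant on $\partial G$; since the
transformation sends a state $x$ with boundary value $k$ to $y=x-v\in Y$ with
zero boundary value, the compatibility conditions line up exactly, and the
equivalence follows.
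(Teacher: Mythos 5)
Your proposal is correct and follows essentially the same route as the paper: the equivalence $(i)\Leftrightarrow(ii)$ is obtained by citing Theorem~\ref{thm:Monotonicity_Parabolic_Systems} together with Proposition~\ref{prop:ISS_Monotone_states_inputs_Systems}, and $(ii)\Leftrightarrow(iii)$ is obtained from the shift $y=x-v$ in \eqref{eq:phi_phyy_relation}, the triangle inequality absorbing $\|v\|_p\le(\mu(G))^{1/p}\|v\|_{\Vc}$, the splitting $\beta(a+b,t)\le\beta(2a,t)+\beta(2b,t)$, and the admissibility of constant inputs guaranteed by (H1). The only cosmetic difference is that the paper organizes the second step as $(iii)\Rightarrow(i)$ and $(ii)\Rightarrow(iii)$ rather than a single biconditional, which changes nothing of substance.
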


\begin{proof}
(i) $\Iff$ (ii). Follows by Theorem~\ref{thm:Monotonicity_Parabolic_Systems} and Proposition~\ref{prop:ISS_Monotone_states_inputs_Systems}.

(iii) $\Rightarrow$ (i).
Pick any $x\in X$ and a constant in time and space input $u \in \Uc(x)$, with $u(t,z) = k$ for a certain $k\in\R$ and all $t\ge0$, $z\in\partial G$. 
Define $v(t,z):=k$ for all $t\ge 0$, $z\in \overline{G}$. Then $v|_{\partial G} = u \in \Uc(x)$ and \eqref{eq:phi_phyy_relation} holds.

ISS of \eqref{eq:Nonlinear_Parabolic_Equation_transformed}, \eqref{eq:Dirichlet_transformed} with state space $Y$ w.r.t. inputs in $\Vc$ and equation 
\eqref{eq:phi_phyy_relation} ensures that
\begin{eqnarray*}
\|\phi(t,x,u) - v\|_p \leq \beta(\|x-v\|_p,t) + \gamma(\|v\|_\Vc).
\end{eqnarray*}
Since $\|v\|_p \leq (\mu(G))^{1/p}\|v\|_\Vc = (\mu(G))^{1/p}\|u\|_\Uc$, we proceed to
\begin{eqnarray*}
\|\phi(t,x,u)\|_p &\leq& \|\phi(t,x,u) - v\|_p + \|v\|_p \\
&\leq& \beta(\|x-v\|_p,t) + \gamma(\|u\|_\Uc) + (\mu(G))^{1/p}\|u\|_\Uc \\
&\leq& \beta(\|x\|_p + \|v\|_p,t) + \gamma(\|u\|_\Uc) + (\mu(G))^{1/p}\|u\|_\Uc\\
&\leq& \beta(\|x\|_p + (\mu(G))^{1/p}\|u\|_\Uc,t) + \gamma(\|u\|_\Uc) + (\mu(G))^{1/p}\|u\|_\Uc.
\end{eqnarray*}

By means of the trivial inequality $\beta(a+b,t)\leq \beta(2a,t) + \beta(2b,t)$, which holds for any $a,b,t\in\R_+$, we proceed to
\begin{eqnarray*}
\|\phi(t,x,u)\|_p  &\leq& \beta(2\|x\|_p,t) + \beta(2(\mu(G))^{1/p}\|u\|_{\Uc},t) + \gamma(\|u\|_{\Uc}) + (\mu(G))^{1/p}\|u\|_{\Uc} \\
&\leq& \beta(2\|x\|_p,t) + \tilde\gamma(\|u\|_{\Uc}),
\end{eqnarray*}
where $\tilde\gamma(r):= \beta(4(\mu(G))^{1/p}r,0) + \gamma(r) + (\mu(G))^{1/p}r$. Clearly, $\tilde\gamma\in\Kinf$.

This shows that the system \eqref{GrindEQ__22_}, \eqref{GrindEQ__23_}, \eqref{GrindEQ__24_} is ISS.

(ii) $\Rightarrow$ (iii).
Let the system \eqref{GrindEQ__22_}, \eqref{GrindEQ__23_}, \eqref{GrindEQ__24_} be ISS for constant inputs.
Then there exist $\beta \in \KL$ and $\gamma \in \Kinf$ so that for all $t\geq0$, all $x\in X$ and all constant $u\in\Uc(x)$ the estimate
\eqref{eq:ISS_estimate} holds.
Pick any $v \in \Vc$, which we consider as an input to the system \eqref{eq:Nonlinear_Parabolic_Equation_transformed}, \eqref{eq:Dirichlet_transformed}. Pick also any initial state $y\in Y$. Then $(x,u):=(y+v,v)$ is an admissible pair for the system \eqref{GrindEQ__22_}, \eqref{GrindEQ__23_}, \eqref{GrindEQ__24_}. ISS of \eqref{GrindEQ__22_}, \eqref{GrindEQ__23_}, \eqref{GrindEQ__24_} for constant inputs together with \eqref{eq:phi_phyy_relation} leads to
\begin{eqnarray*}
\| \phi_y(t,y,v) + v \|_p \leq \beta(\| y+v \|_p,t) + \gamma( \|v|_{\partial G}\|_{\Uc}).
\end{eqnarray*}
Thus,
\begin{eqnarray*}
\| \phi_y(t,y,v) \|_p &\leq& \| \phi_y(t,y,v) + v \|_p + \|v\|_p\\
&=& \| \phi(t,y+v,v|_{\partial G})\|_p + \|v\|_p\\
&\leq& \beta(\| y+v \|_p,t) + \gamma( \|v|_{\partial G}\|_{\Uc}) + (\mu(G))^{1/p}\|v\|_\Vc \\
&\leq& \beta(\| y\|_p + \|v \|_p,t) + \gamma( \|v\|_\Vc) + (\mu(G))^{1/p}\|v\|_\Vc \\
&\leq& \beta(2\| y\|_p,t)+\beta(2 \|v \|_p,t) + \gamma( \|v\|_\Vc) + (\mu(G))^{1/p}\|v\|_\Vc \\
&\leq& \beta(2\| y\|_p,t)+\beta(2 (\mu(G))^{1/p} \|v \|_\Vc,0) + \gamma( \|v\|_\Vc) + (\mu(G))^{1/p}\|v\|_\Vc \\
&=& \beta(2\| y\|_p,t)+\tilde\gamma_2( \|v\|_\Vc),
\end{eqnarray*}
with $\tilde\gamma_2(r):= \beta(2(\mu(G))^{1/p}r,0) + \gamma(r) + (\mu(G))^{1/p}r$. This shows implication  (ii) $\Rightarrow$ (iii).
\end{proof}

The same argument justifies the following result on exp-ISS property:
\begin{theorem}
\label{thm:exp-ISS_Parabolic_Systems_Characterization}
Suppose that assumptions (H1), (H2) and (H3) hold and let $p\in[1,+\infty]$. Let us endow the linear spaces $X,Y$ with the norm $\left\| \cdot \right\| _{p}$.
The following statements are equivalent:

\begin{itemize}
 \item[(i)] The system \eqref{GrindEQ__22_}, \eqref{GrindEQ__23_}, \eqref{GrindEQ__24_}  with the state space $X$ is exp-ISS w.r.t. inputs of class $\Uc$.
 \item[(ii)] The system \eqref{GrindEQ__22_}, \eqref{GrindEQ__23_}, \eqref{GrindEQ__24_} with the state space $X$ is exp-ISS w.r.t. constant in time inputs of class $\Uc$.
 \item[(iii)] The system \eqref{eq:Nonlinear_Parabolic_Equation_transformed}, \eqref{eq:Dirichlet_transformed} 
 with the state space $Y$ is exp-ISS w.r.t. inputs in $\Vc$.
\end{itemize}
\end{theorem}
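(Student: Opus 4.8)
The plan is to follow the proof of Theorem~\ref{thm:ISS_Parabolic_Systems_Characterization} line by line, replacing everywhere the $\KL$-estimate $\beta(\|\cdot\|_p,t)$ by the exponential estimate $Me^{-at}\|\cdot\|_p$. The equivalence (i)$\,\Iff\,$(ii) is the most immediate: Theorem~\ref{thm:Monotonicity_Parabolic_Systems}(ii) guarantees that $\Sigma$ is monotone w.r.t. states and inputs and that conditions (i), (ii) of Proposition~\ref{prop:ISS_Monotone_states_inputs_Systems} hold with \emph{linear} $\rho$, $\eta$, $\xi$ and with $\Uc_{c}$ given by \eqref{eq:Uc_const_def}. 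Since $\rho$ and $\xi$ are linear, the exp-ISS clause of Proposition~\ref{prop:ISS_Monotone_states_inputs_Systems} applies verbatim and yields that $\Sigma$ is exp-ISS if and only if it is exp-ISS w.r.t. inputs in $\Uc_{c}$, which is exactly (i)$\,\Iff\,$(ii).

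For (iii)$\,\Rightarrow\,$(i) I would proceed as in Theorem~\ref{thm:ISS_Parabolic_Systems_Characterization}. Given $x\in X$ and a constant $u\in\Uc(x)$ with $u\equiv k$ on $\R_+\times\partial G$, set $v(t,z):=k$ on $\R_+\times\overline{G}$, so that $v|_{\partial G}=u$ and the change-of-variables identity \eqref{eq:phi_phyy_relation} holds with $y=x-v\in Y$. Exp-ISS of the transformed system \eqref{eq:Nonlinear_Parabolic_Equation_transformed}, \eqref{eq:Dirichlet_transformed} then provides constants $M,a$ and $\gamma\in\Kinf$ with
\[
\|\phi(t,x,u)-v\|_p=\|\phi_y(t,y,v)\|_p\le Me^{-at}\|x-v\|_p+\gamma(\|v\|_{\Vc}).
\]
Using $\|v\|_p\le(\mu(G))^{1/p}\|v\|_{\Vc}=(\mu(G))^{1/p}\|u\|_{\Uc}$, the triangle inequality, and the crucial observation that $e^{-at}\le 1$ for $t\ge 0$, every term carrying a factor $\|v\|_p$ collapses into a contribution that is linear in $\|u\|_{\Uc}$, while the state-dependent term retains the clean factor $Me^{-at}\|x\|_p$. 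Concretely one arrives at $\|\phi(t,x,u)\|_p\le Me^{-at}\|x\|_p+\tilde\gamma(\|u\|_{\Uc})$ with $\tilde\gamma(r):=M(\mu(G))^{1/p}r+\gamma(r)+(\mu(G))^{1/p}r\in\Kinf$, which is (i).

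The direction (ii)$\,\Rightarrow\,$(iii) is the mirror image: starting from any $v\in\Vc$ and $y\in Y$, the pair $(x,u):=(y+v,v|_{\partial G})$ is admissible for \eqref{GrindEQ__22_}--\eqref{GrindEQ__24_}, the input $u$ is constant in time and space, and \eqref{eq:phi_phyy_relation} again links the two flows. Applying the exp-ISS estimate for constant inputs and bounding $\|v\|_p\le(\mu(G))^{1/p}\|v\|_{\Vc}$ yields $\|\phi_y(t,y,v)\|_p\le Me^{-at}\|y\|_p+\tilde\gamma_2(\|v\|_{\Vc})$ for a suitable $\tilde\gamma_2\in\Kinf$. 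The one point that genuinely differs from the $\KL$-case --- and is in fact the only place where care is needed --- is that the exponential structure must survive the affine shift by $v$; here it does precisely because $e^{-at}\le 1$ lets one absorb the perturbation term $Me^{-at}\|v\|_p$ into the input gain \emph{without ever having to evaluate the decay rate at $t=0$}, so the rate $a$ is transported unchanged through all three equivalences. This is exactly why the exponential case turns out to be slightly cleaner than the general $\KL$-case, in which the step $\beta(\cdot,t)\le\beta(\cdot,0)$ was unavoidable.
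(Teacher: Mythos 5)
Your proof is correct and takes essentially the same route as the paper, which likewise obtains (i)$\,\Iff\,$(ii) from Theorem~\ref{thm:Monotonicity_Parabolic_Systems} (linearity of $\rho$, $\eta$, $\xi$) together with the exp-ISS clause of Proposition~\ref{prop:ISS_Monotone_states_inputs_Systems}, and handles (ii)$\,\Iff\,$(iii) by rerunning the computation of Theorem~\ref{thm:ISS_Parabolic_Systems_Characterization} with $\beta(r,t)$ replaced by $Me^{-at}r$; your write-up is in fact more detailed than the paper's two-line proof. One cosmetic quibble: bounding $Me^{-at}\left\| v\right\|_{p}$ by $M\left\| v\right\|_{p}$ \emph{is} evaluating the decay factor at $t=0$, so your closing remark overstates the difference from the $\KL$ case, though this changes nothing mathematically.
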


\begin{proof}
(i) $\Iff$ (ii). Follows by Theorem~\ref{thm:Monotonicity_Parabolic_Systems} (here linearity of $\rho$, $\eta$, $\zeta$ is important) and exp-ISS part of Proposition~\ref{prop:ISS_Monotone_states_inputs_Systems}.

(ii) $\Iff$ (iii). Along the lines of the proof of Theorem~\ref{thm:ISS_Parabolic_Systems_Characterization}.
\end{proof}

\section{Applications}

In this section, we apply the above results to several problems of specific interest.
We continue to assume that $G\subset \R^n$ is an open connected and bounded set with the smooth boundary
and $D:=(0,+\infty)\times G$.

\subsection{ISS of linear parabolic systems with boundary inputs}
\label{sec:Lin_Parabolic_Systems}

Consider the linear heat equation with a Dirichlet boundary input:
\begin{equation}
\label{eq:IBVP_Operator_L_Linear}
\begin{array}{l}
\frac{\partial x}{\partial t}(t,z) = \Delta x(t,z) + ax(t,z),\quad (t,z) \in D,  \\
x(t,z) = u(t,z), \quad (t,z)\in (0,+\infty) \times \partial G,\\
x(0,z) = x_0(z), \quad z \in \overline{G},
\end{array}
\end{equation}
where $\Delta$ is a Laplacian and $a \in \R$. 

The input $u:\R_+ \times\partial G\to \R$ is the trace of a function $\nu\in C^{0}(\R_+\times\overline{G}) \bigcap C^{1,2}((0,+\infty)\times G)$. 
Defining the function
\begin{eqnarray}
f(t,z):=\Delta \nu(t,z) - \frac{\partial \nu}{\partial t}(t,z),\quad t\geq0,\ z\in G
\label{eq:f_def}
\end{eqnarray}
and using the transformation
\begin{eqnarray}
x(t,z)=e^{at}\big(y(t,z)+\nu(t,z)\big),\quad t\geq 0 ,\ z\in \overline{G}
\label{eq:Aux_Transformation}
\end{eqnarray}
we are in a position to study an equivalent to \eqref{eq:IBVP_Operator_L_Linear} initial boundary value problem
\begin{equation}
\label{eq:IBVP_Operator_L_Linear_modified}
\begin{array}{l}
\frac{\partial y}{\partial t}(t,z) = \Delta y(t,z) + f(t,z),\quad (t,z) \in D, \\
y(t,z) = 0, \quad (t,z)\in (0,+\infty) \times \partial G,\\
y(0,z) = x(0,z)-\nu(0,z), \quad z \in \overline{G}.
\end{array}
\end{equation}
Let $m>0$ be the smallest integer for which $m\geq \frac{1+[n/2]}{2}$. \cite[Theorem 6, p. 365]{Eva98} in conjunction with \cite[Theorem 4, p. 288]{Eva98}
and \cite[Theorem 6, p. 270]{Eva98} guarantees that if 
\begin{itemize}
 \item[(p-i)] $y[0]\in H^{2m+1}(G)$, $\frac{d^k}{dt^k}(f[t]) \in L^2(0,T;H^{2m-2k}(G))$ for every $k=0,\ldots,m$ and $T>0$,
 \item[(p-ii)] $g_i \in H^1_0(G)$ for $i=0,\ldots,m$, where $g_0:=y[0]$, $g_m:=\frac{d^{m-1}}{dt^{m-1}}f[0]+\Delta g_{m-1}$,
\end{itemize}
then the initial boundary value problem \eqref{eq:IBVP_Operator_L_Linear_modified} has a unique solution $y\in CL$.

Therefore, using the transformation \eqref{eq:Aux_Transformation}, we conclude that for every $x_0\in H^{2m+1}(G)$ and for every input $u\in \R_+\times \partial G \to \R$ being the trace of a function $\nu\in C^{0}(\R_+\times\overline{G}) \bigcap C^{1,2}((0,+\infty)\times G)$ and satisfying (p-i), (p-ii), 
also the initial boundary value problem \eqref{eq:IBVP_Operator_L_Linear} has a unique solution $x\in CL$.
Hence, \eqref{eq:IBVP_Operator_L_Linear} defines a control system with
\begin{itemize}
 \item $X:=H^{2m+1}(G)$ with $\|\cdot\|_p$-norm, for any fixed $p\in[1,+\infty]$.
 \item  $\Uc(x)$ being the set of all inputs $u:\R_{+} \times \partial G\to \R$ which are traces of functions $\nu\in C^{0}(\R_+\times\overline{G}) \bigcap C^{1,2}((0,+\infty)\times G)$ satisfying $\frac{d^{k} }{d\, t^{k} } \left(f[t]\right)\in L^{2} \big(0,T;H^{2m-2k} (G)\big)$ for every $k=0,...,m$ and $T>0$ and $g_{i} \in H_{0}^{1} (G)$ for $i=0,...,m$, where $f$ is defined by 
 \eqref{eq:f_def}, $g_{0} :=x-v[0]$, $g_{1} :=f[0]+\Delta g_{0} $,\dots , $g_{m} :=\frac{d^{m-1} }{d\, t^{m-1} } \left(f[0]\right)+\Delta g_{m-1} $,

 \item $\phi(t,x_0,u)$ being the unique solution $x[t]$ of the initial boundary value problem \eqref{eq:IBVP_Operator_L_Linear}.
\end{itemize}
Notice that $X$ contains the constant functions. We conclude that $\Sigma$ satisfies (H1). Clearly, (H2) holds for $\Sigma$ as well.

Hence we obtain from Theorem~\ref{thm:exp-ISS_Parabolic_Systems_Characterization}:
\begin{corollary}
\label{cor:ISS_Linear_Heat_Equation}
Assume that $G \subset \R^n$ is an open bounded set with a smooth boundary for which Assumption (H3) holds. 
Then $\Sigma=(X,\Uc,\phi)$ is exp-ISS with a linear gain function iff $\Sigma$ is 0-UGAS.
\end{corollary}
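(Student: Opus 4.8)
The plan is to treat the two implications separately and to lean on the equivalence established in Theorem~\ref{thm:exp-ISS_Parabolic_Systems_Characterization}, which (as its proof shows) preserves linearity of the gain function. The forward implication is immediate: if $\Sigma$ is exp-ISS with a linear gain, then substituting $u=0$ into the exp-ISS estimate \eqref{eq:exp-ISS_estimate} gives $\|\phi(t,x,0)\|_p \le Me^{-\lambda t}\|x\|_p$ (I write $M,\lambda$ for the exp-ISS constants to avoid clashing with the reaction coefficient $a$, and use $\gamma(0)=0$). Since $(r,t)\mapsto Me^{-\lambda t}r$ belongs to $\KL$, this is precisely the 0-UGAS estimate \eqref{UniStabAbschaetzung}.

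For the converse I would first reduce, via Theorem~\ref{thm:exp-ISS_Parabolic_Systems_Characterization}, the task of proving exp-ISS of $\Sigma$ to proving that the transformed system \eqref{eq:Nonlinear_Parabolic_Equation_transformed}, \eqref{eq:Dirichlet_transformed} on $Y$ is exp-ISS with a linear gain w.r.t. inputs in $\Vc$. For the heat operator here one has $f(z,x,\xi)=ax$, so this transformed system is the \emph{linear} problem $\partial_t y = \Delta y + ay + av$ with homogeneous Dirichlet data and a spatially constant input $v\equiv k$. Its zero-input dynamics coincide with those of $\Sigma$: both are governed by the solution semigroup $T(t):=\phi(t,\cdot,0)$ of the Dirichlet reaction-diffusion equation on $Y$. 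The key observation is that 0-UGAS, which only furnishes a $\KL$ bound for $T$, can be upgraded to exponential decay using linearity. Concretely, homogeneity $\phi(t,\lambda x,0)=\lambda\phi(t,x,0)$ gives $\|T(t)y_0\|_p \le \beta(1,t)\|y_0\|_p$ for all $y_0\in Y$, and since $\beta(1,\cdot)\in\LL$ tends to $0$ there is $t_0>0$ with $\beta(1,t_0)<1$; iterating through the cocycle (semigroup) property $T(nt_0)=T(t_0)^n$ and interpolating over $[0,t_0]$ then yields constants $M,\lambda>0$ with $\|T(t)\|\le Me^{-\lambda t}$.

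With exponential stability of $T$ in hand, the remaining step is a variation-of-constants estimate for the forced linear equation: writing $\phi_y(t,y_0,v)=T(t)y_0+\int_0^t T(t-s)(av)\,ds$ and using $\|T(\tau)\|\le Me^{-\lambda\tau}$ together with $\|av\|_p = |a|(\mu(G))^{1/p}\|v\|_\Vc$ gives
\[
\|\phi_y(t,y_0,v)\|_p \le Me^{-\lambda t}\|y_0\|_p + \frac{M|a|}{\lambda}(\mu(G))^{1/p}\|v\|_\Vc,
\]
which is exp-ISS with a linear gain. Theorem~\ref{thm:exp-ISS_Parabolic_Systems_Characterization} then transfers this back to $\Sigma$, completing the converse.

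I expect the main obstacle to be the upgrade from the $\KL$ estimate to genuine exponential decay, i.e. making linearity do the work; this is where the hypothesis that the system is linear (and the resulting homogeneity and semigroup composition) is essential, and it is precisely what forces the decay to be exponential and the gain to be linear. A secondary technical point is that the constant forcing $av$ does not lie in $Y$, so the decay bound for $T$ must be extended from $Y$ to the ambient space $L^p(G)$ in which $av$ lives; for $p<\infty$ this follows from density of $Y$ (it contains $C_0^\infty(G)$) and boundedness of $T(t)$ on $L^p(G)$, while for $p=\infty$ one can instead bound the forced solution directly through the comparison principle of Proposition~\ref{prop:New_Comparison_Principle}.
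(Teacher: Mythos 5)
Your proposal is correct and follows essentially the same route as the paper: both directions reduce via Theorem~\ref{thm:exp-ISS_Parabolic_Systems_Characterization} (whose constructions visibly preserve linearity of the gain) to the Dirichlet problem $\partial_t y=\Delta y+ay+av$ with constant distributed input, establish exponential stability of the underlying semigroup from the 0-UGAS hypothesis, and conclude by a variation-of-constants estimate. The only difference is one of self-containedness: you spell out the upgrade from the $\KL$ bound to exponential decay via homogeneity and iteration of $T(t_0)$, and write Duhamel's formula explicitly, where the paper instead invokes the Pazy generation theorems (working in $C_0(\overline{G})$ for $p=\infty$, cf.\ Remark~\ref{rem:Linfinity}) and cites \cite[Proposition 3]{DaM13} for the exp-ISS estimate with linear gain.
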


\begin{proof}
Clearly, if $\Sigma=(X,\Uc,\phi)$ is exp-ISS, then $\Sigma=(X,\Uc,\phi)$ is 0-UGAS.

Now assume that $\Sigma=(X,\Uc,\phi)$ is 0-UGAS and let us prove the converse implication. 
\eqref{eq:IBVP_Operator_L_Linear} is a problem \eqref{GrindEQ__1_}, corresponding to the operator
$Lx:= \frac{\partial x}{\partial t} - \Delta x - ax$.

According to Theorem~\ref{thm:exp-ISS_Parabolic_Systems_Characterization}, exp-ISS of \eqref{eq:IBVP_Operator_L_Linear} is equivalent to exp-ISS of the system
\begin{eqnarray}
\frac{\partial y}{\partial t} = \Delta y +ay+av,\quad t >0, \ z \in G,
\label{eq:Nonlinear_Parabolic_Equation_transformed-2}
\end{eqnarray}
with homogeneous Dirichlet boundary condition \eqref{eq:Dirichlet_transformed} and constant inputs $v \in \R$.

In order to prove the claim, we are going to use the semigroup approach. Consider three cases: $p\in (1,+\infty)$, $p=1$ and $p=+\infty$.
The operator $A_p$, $p\in (1,+\infty)$ with a domain of definition $D(A_p):=W^{2,p}(G) \bigcap W^{1,p}_0(G)$ and $A_p x:=\Delta x + ax$, for $x\in D(A_p)$ generates an analytic semigroup over $L^p(G)$, see \cite[Theorem 3.6, p. 215]{Paz83}.

For $p=1$ the operator $A_1$ with a domain of definition $D(A_1):= \{x\in W^{1,1}_0(G):\Delta x \in L^1(G)\}$ and $A_1 x:=\Delta x + ax$, for $x\in D(A_1)$ generates an analytic semigroup over $L^1(G)$, see \cite[Theorem 3.10, p. 218]{Paz83}.
Analogously, one can define an operator $A_\infty:=\Delta + aI$ with a certain domain of definition $D(A_\infty)$, which generates an analytic semigroup over $C_0(\overline{G}):=\{x\in C(\overline{G}): x(z)=0 \ \forall z\in\partial G\}$, see \cite[Theorem 3.7, p. 217]{Paz83}.

Now, since we assume that $\Sigma$  is 0-UGAS in the norm $\|\cdot \|_p$, then also the operator $A_p$ generates 0-UGAS (exponentially stable) $C_0$-semigroup $T_p$, which follows since $X$ is dense in the $L^p(G)$ for $p\in[1,\infty)$ and in $C_0(\overline{G})$ for $p=\infty$ and since $T_p$ is a semigroup of bounded operators (hence the norm of the operator $T_p(t)$ with a domain restricted to $Y$ is equal to the norm of $T_p(t)$ as an operator on $X$).


Since $T_p$ is an exponentially stable semigroup, \cite[Proposition 3]{DaM13} ensures that \eqref{eq:Nonlinear_Parabolic_Equation_transformed-2} is exp-ISS with a linear gain function. 
An inspection of Theorem~\ref{thm:exp-ISS_Parabolic_Systems_Characterization} shows that \eqref{eq:IBVP_Operator_L_Linear} is exp-ISS with a linear gain function as well.
%
%
%
\end{proof}

\begin{remark}\em
\label{rem:Linfinity}
Note that the operator $\Delta + aI$ in the above proof does not generate a strongly continuous semigroup over the space $L^\infty(G)$, see \cite[p.217]{Paz83} or \cite[Lemma 2.6.5, Remark 2.6.6]{CaH98}, therefore it is of importance to define $A_\infty$ as a generator of strongly continuous semigroup over $C_0(\overline{G})$.
\end{remark}

\begin{remark}\em
\label{rem:Practical_applications}
There are different ways to ensure that $\Sigma$ is 0-UGAS. For $p\in(1,+\infty)$ a usual way would be to construct a Lyapunov functional. 
On the other hand, since $T_p$ is an analytic semigroup for any $p\in[1,+\infty]$,
$T_p$ is exponentially stable (i.e. 0-UGAS) if and only if the spectrum of $A_p$ lies in $\{z\in \C:\text{Re}z<0\}$, see e.g. \cite[p.387]{Tri75}.
\end{remark}
%

\subsection{ISS stabilization of 1-D parabolic systems via backstepping. }

Consider the initial-boundary value problem
\begin{eqnarray}
\frac{\partial \, x}{\partial \, t} (t,z)=a\frac{\partial ^{2} \, x}{\partial \, z^{2} } (t,z),\quad (t,z)\in (0,+\infty )\times (0,1),
\label{GrindEQ__47_}
\end{eqnarray}
where $a>0$ is a constant, subject to the boundary conditions
\begin{eqnarray}
x(t,0)-d_{0} (t)=x(t,1)-d_{1} (t)=0,\quad t\ge 0,
\label{GrindEQ__48_}
\end{eqnarray}
where $d_{0} ,d_{1} :\R _{+} \to \R $ are given boundary inputs and the initial condition
\begin{eqnarray}
 x(0,z)=x_{0} (z),\quad z\in [0,1],
\label{GrindEQ__49_}
\end{eqnarray}
where $x_{0} :[0,1]\to \R $ is a given function. Using \cite[Theorem 2.1]{KaK17a}, we are in a position to verify 
assumptions (H1), (H2), (H3) with
\begin{itemize}
\item  state space $X:=C^{2} ([0,1])$,

\item  input set $\Uc(x):=\Big\{(d_{0},d_{1})\in C^{2} (\R _{+} )\times C^{2} (\R _{+})\, :\, d_{0} (0)=x(0)\, ,\, \mathop{\sup }\limits_{t\ge 0} \left|d_{0} (t)\right|<+\infty ,\, d_{1} (0)=x(1) ,\, \mathop{\sup }\limits_{t\ge 0} \left|d_{0} (t)\right|<+\infty \Big\}$,

\item  $G:=(0,1)$, $f(z,w,\xi ):=0$, $(Lx[t])(z):=\frac{\partial \, x}{\partial \, t} (t,z)-a\frac{\partial ^{2} \, x}{\partial \, z^{2} } (t,z)$. 
\end{itemize}

\cite[Corollary 2.5]{KaK17a} ensures the following estimates for all $t\ge 0$, $x_{0} \in X$ and $(d_{0},d_{1}) \in \Uc(x_{0} )$:
\begin{equation} 
\label{GrindEQ__50_} 
\int _{0}^{1}\sin (\pi z)\left|x(t,z)\right|dz \le e^{-\, a\pi ^{2} \, t}\, \int _{0}^{1}\sin (\pi z)\left|x_{0} (z)\right|dz 
+\frac{\, 1}{\pi } \mathop{\max }\limits_{0\le s\le t} \left|d_{0} (s)\right|
+\frac{\, 1}{\pi } \mathop{\max }\limits_{0\le s\le t} \left|d_{1} (s)\right|,
\end{equation} 

\begin{equation} 
\label{GrindEQ__51_} 
\left\| x[t]\right\| _{2} \le \sqrt{\frac{e^{-a\pi ^{2} \, t}}{2-e^{-a\pi ^{2} \, t}} } \left\| x_{0} \right\| _{2} 
+\frac{1}{\sqrt{3} } \mathop{\max }\limits_{0\le s\le t} \left|d_{0} (s)\right|                        
+\frac{1}{\sqrt{3} } \mathop{\max }\limits_{0\le s\le t} \left|d_{1} (s)\right|,
\end{equation} 

\begin{equation} 
\label{GrindEQ__52_}
\begin{split}
\mathop{\max }\limits_{0\le z\le 1} & \frac{\sin (\theta +\varphi )\left|x(t,z)\right|}{\sin \left(\theta +z\varphi \right)} \\
&\quad\le \max \left(e^{-\sigma \, t}\, \mathop{\max }\limits_{0\le z\le 1} \frac{\sin (\theta +\varphi )\left|x_{0} (z)\right|}{\sin \left(\theta +z\varphi \right)}\, ,\, \frac{\sin \left(\theta +\varphi \right)}{\sin \left(\theta \right)} \mathop{\max }\limits_{0\le s\le t} \left|d_{0} (s)\right|,\mathop{\max }\limits_{0\le s\le t} \left|d_{1} (s)\right|\right)\, ,
\end{split}
\end{equation}
for all $\sigma \in (0, a\pi ^{2} )$, $\theta \in (0,\pi -\varphi )$ with $\varphi :=\sqrt{\frac{\sigma }{a} } $.

It is clear that estimates \eqref{GrindEQ__50_}, \eqref{GrindEQ__51_}, \eqref{GrindEQ__52_} are ISS estimates with respect to the boundary disturbances $d_{0},d_{1} :\R _{+} \to \R $ expressed in $L^{2} ,L^{\infty}$ and weighted $L^1$ norms, respectively.

Here, we prove the following property: for every $p\in (2,+\infty )$ there exist constants $M_{p} ,\sigma _{p} ,\gamma _{p} >0$ such that for every $x_{0} \in X$, $(d_{0},d_{1}) \in \Uc(x_{0})$ and $t\ge 0$ the following estimate holds:
\begin{equation} 
\label{GrindEQ__53_} 
\left\| x[t]\right\| _{p} \le M_{p} e^{-\sigma _{p} \, t}\, \left\| x_{0} \right\| _{p} 
+\gamma _{p} \mathop{\max }\limits_{0\le s\le t} \left|d_{0} (s)\right|
+\gamma _{p} \mathop{\max }\limits_{0\le s\le t} \left|d_{1} (s)\right|.
\end{equation} 

First we show that $0\in X$ is exponentially stable in the norm of $L^{p} \left(0,1\right)$ for the system \eqref{GrindEQ__47_}, \eqref{GrindEQ__48_} with $p\in (2,+\infty )$, $d_{0} (t)\equiv 0$, $d_{1} (t)\equiv 0$. This follows from the consideration of the Lyapunov functional 
\[
V_{p} (x)=\int _{0}^{1}\left|x(z)\right|^{p} dz,\quad p\in (2,+\infty).
\]
We get for every $p\in (2,+\infty )$, $x_{0} \in X$ and $t>0$ for the solution $x[t]$ of \eqref{GrindEQ__47_}, \eqref{GrindEQ__48_} with $d_{0} (t)\equiv 0$, $d_{1} (t)\equiv 0$:
\begin{eqnarray*}
\frac{d}{dt} V_{p} (x[t])&=&p\int _{0}^{1}\left|x(t,z)\right|^{p-1} \frac{\partial \, x}{\partial \, t} (t,z)dz =ap\int _{0}^{1}\left|x(t,z)\right|^{p-1} \frac{\partial ^{2} \, x}{\partial \, z^{2} } (t,z)dz  \\ 
&=& -ap(p-1)\int _{0}^{1}\left|x(t,z)\right|^{p-2} \left(\frac{\partial \, x}{\partial \, z} (t,z)\right)^{2} dz\\
&=& -ap(p-1)\frac{4}{p^{2} } \int _{0}^{1}\left(\frac{\partial \, }{\partial \, z} \left|x(t,z)\right|^{p/2} \right)^{2} dz  \\ 
&\le& -ap(p-1)\frac{4\pi^2}{p^{2} } \int _{0}^{1}\left|x(t,z)\right|^{p} dz \\
&=&-ap(p-1)\frac{4\pi^2}{p^{2} } V_{p} (x[t]) 
=-a(p-1)\frac{4\pi^2}{p} V_{p} (x[t]).
\end{eqnarray*}

In the above derivation we used the Wirtinger's inequality $\left\| f'\right\| _{2}^{2} \ge \pi^2\left\| f\right\| _{2}^{2}$, which holds for all $f\in C^{1} ([0,1])$ with $f(0)=f(1)=0$. The above inequality implies the estimate 
\[
V_{p} (x[t])\le e^{-a(p-1)\frac{4\pi^2}{p} t}V_{p} (x_{0} ),\quad t\ge 0,
\]
which in turn shows the exponential stability
\[
\left\| x[t]\right\| _{p} \le e^{-a(p-1)\frac{4\pi^2}{p^2} t}\, \left\| x_{0} \right\| _{p}, \quad t\ge 0.
\]
%
%
%
Based on the above exponential stability estimate and using \cite[Proposition 3]{DaM13} in a similar manner as in the proof of Corollary~\ref{cor:ISS_Linear_Heat_Equation}, we may show that statement (iii) of Theorem~\ref{thm:exp-ISS_Parabolic_Systems_Characterization} holds, and exp-ISS with a linear gain function of 
\eqref{GrindEQ__47_}, \eqref{GrindEQ__48_}, \eqref{GrindEQ__49_} follows from Theorem~\ref{thm:exp-ISS_Parabolic_Systems_Characterization}.
In other words, for every $p\in (2,+\infty )$ there exist constants $M_{p} ,\sigma _{p} ,\gamma _{p} >0$ such that for every $x_{0} \in X$, $(d_{0},d_{1}) \in \Uc(x_{0})$ and $t\ge 0$ the following estimate holds:

\begin{equation} 
\label{eq:Tmp_ISS_estimate} 
\left\| x[t]\right\| _{p} \le M_{p} e^{-\sigma _{p} \, t}\, \left\| x_{0} \right\| _{p} 
+\gamma _{p} \mathop{\max }\limits_{s\ge 0} \left|d_{0} (s)\right|
+\gamma _{p} \mathop{\max }\limits_{s\ge 0} \left|d_{1} (s)\right|.
\end{equation} 
Now for any $\eps>0$ one can find $(\tilde d_{0},\tilde d_{1}) \in \Uc(x_{0})$  so that 
$\tilde d_i(s) = d_i(s)$ for $i=1,2$ and $s\in[0,t]$ and $\sup_{s\geq 0}|\tilde d_i(s)| \leq \sup_{s\in[0,t]}|d_i(s)| + \eps$.

A possible choice for $\tilde d_i$, $i=1,2$ could be
\[
\tilde{d}_i(s):=\begin{cases}
d_i(s) & \text{ if } s\in[0,t], \\ 
\text{sufficiently smooth} & \text{ if } s\in[t,t+\delta], \\ 
d_i(t+\delta) & \text{ if } s\ge t,
\end{cases}
\]
where $\delta=\delta(\eps)$ is chosen small enough, and $d_i$ is approximated on $[t,t+\delta]$ by a sufficiently smooth function so that $\tilde d_0, \tilde d_1$ satisfy above properties.
Now applying \eqref{eq:Tmp_ISS_estimate} to the same $x_0$, $t$ and disturbances $\tilde d_0,\tilde d_1$, and recalling causality property ($\Sigma$\ref{axiom:Causality}), which shows that $\phi(s,x_0,(d_0,d_1)) = \phi(s,x_0,(\tilde d_0,\tilde d_1))$ for $s\in [0,t]$, we obtain
\begin{equation} 
\label{eq:Tmp_ISS_estimate-2} 
\left\| x[t]\right\| _{p} \le M_{p} e^{-\sigma _{p} \, t}\, \left\| x_{0} \right\| _{p} 
+\gamma _{p} (\mathop{\max }\limits_{s\in[0,t]} \left|d_{0} (s)\right| + \eps)
+\gamma _{p} (\mathop{\max }\limits_{s\in[0,t]} \left|d_{1} (s)\right| + \eps).
\end{equation} 
Since $\eps>0$ has been chosen arbitrarily and the rhs of \eqref{eq:Tmp_ISS_estimate-2} depends continuously on $\eps$, we can take the limit $\eps\to+0$, which proves \eqref{GrindEQ__53_}.
 
The above result is important for control purposes. The authors of \cite{SmK04,KrS08} introduced the exponentially stabilizing feedback design for of parabolic PDEs of the form 
\begin{eqnarray}
\frac{\partial \, y}{\partial \, t} (t,z)=a\frac{\partial ^{2} \, y}{\partial \, z^{2} } (t,z)+ k \, y(t,z),\quad (t,z)\in (0,+\infty )\times (0,1),
\label{GrindEQ__54_}
\end{eqnarray}
where $a>0$, $k \in \R $ are constants, subject to the boundary conditions 
\begin{eqnarray}
y(t,0)-u(t)=y(t,1)=0,\quad t\ge 0,
\label{GrindEQ__55_}    
\end{eqnarray}
where $u(t)\in \R $ is the control input, by means of a boundary feedback stabilizer of the form
\begin{eqnarray}
u(t)=-\int _{0}^{1}k(0,s)y(t,s)ds,\quad t\ge 0,
\label{GrindEQ__56_}
\end{eqnarray}
where $k\in C^{2} \left([0,1]^{2} \right)$ is an appropriate function. The function $k\in C^{2} \left([0,1]^{2} \right)$ is obtained as the Volterra kernel of a Volterra integral transformation 
\begin{eqnarray}
x(t,z)=y(t,z)+\int _{z}^{1}k(z,s)y(t,s)ds,\quad (t,z)\in \R _{+} \times [0,1],
\label{GrindEQ__57_}
\end{eqnarray}
which transforms the PDE problem \eqref{GrindEQ__54_}, \eqref{GrindEQ__55_}, \eqref{GrindEQ__56_} to the problem \eqref{GrindEQ__47_} subject to the boundary conditions 
\begin{eqnarray}
x(t,0)=x(t,1)=0,\quad t\ge 0.
\label{GrindEQ__58_}
\end{eqnarray}

The solution of the original problem can be found by the inverse Volterra integral transformation 
\begin{eqnarray}
y(t,z)=x(t,z)+\int _{z}^{1}l(z,s)x(t,s)ds,\quad (t,z)\in \R _{+} \times [0,1],
\label{GrindEQ__59_}
\end{eqnarray}
where $l\in C^{2} \left([0,1]^{2} \right)$ is an appropriate kernel. The existence of the kernels $k\in C^{2} \left([0,1]^{2} \right)$ and $l\in C^{2} \left([0,1]^{2} \right)$ is guaranteed by the main results in \cite{KrS08}. It should be remarked that in \cite{KrS08} the control input is applied at $z=1$ instead of $z=0$, but the transformation of the spatial variable $z\mapsto 1-z$ allows the statement of the results in the above form (with the control action applied at $z=0$). 

When control actuator errors are present, i.e., when the applied control action is of the form
\begin{eqnarray}
u(t)=d(t)-\int _{0}^{1}k(0,s)y(t,s)ds,\quad t\ge 0,
\label{GrindEQ__60_}
\end{eqnarray}
where \textit{$d\in C^{2} (\R _{+} )$}, then the transformed solution $x(t,z)$ satisfies \eqref{GrindEQ__47_} subject to the boundary conditions 
\begin{eqnarray}
x(t,0)-d_{0} (t)=x(t,1)=0,\quad t\ge 0.
\label{GrindEQ__61_}
\end{eqnarray}
 
Using the transformations \eqref{GrindEQ__57_}, \eqref{GrindEQ__59_}, we obtain the existence of constants $K_{2} >K_{1} >0$ such that the following inequality holds for all $t\ge 0$:
\begin{equation} 
\label{GrindEQ__62_} 
K_{1} \left\| x[t]\right\| _{p} \le \left\| y[t]\right\| _{p} \le K_{2} \left\| x[t]\right\| _{p}.
\end{equation} 

Therefore, using \eqref{GrindEQ__53_} and \eqref{GrindEQ__62_}, we can guarantee that for every $p\in (2,+\infty )$, $x_{0} \in X$: $x_0(1)=0$, $(d_{0},0) \in \Uc(x_{0} )$ and $t\ge 0$ the following estimate holds for the solution of the closed-loop system \eqref{GrindEQ__47_}, \eqref{GrindEQ__61_}:
\begin{equation} 
\label{GrindEQ__63_} 
\left\| y[t]\right\| _{p} \le \frac{K_{2} }{K_{1} } M_{p} e^{-\sigma _{p} \, t}\, \left\| y_{0} \right\| _{p} +K_{2} \gamma _{p} \mathop{\max }\limits_{0\le s\le t} \left|d_{0} (s)\right|.
\end{equation} 
The ISS estimate \eqref{GrindEQ__63_} implies robustness with respect to actuator errors in the norm of $L^{p} \left(0,1\right)$ for $p\in (2,+\infty )$. Similar estimates can be obtained using \eqref{GrindEQ__50_}, \eqref{GrindEQ__51_} and \eqref{GrindEQ__52_} in order to obtain ISS estimates in $L^{2} ,L^{\infty }$ and weighted $L^1$ norms, respectively.

\section{Conclusions}

We presented a new technique for analyzing ISS of linear and nonlinear parabolic equations with boundary inputs.
We prove that parabolic equations with Dirichlet boundary inputs are monotone control systems and use this fact to transform the parabolic system with boundary disturbances into a related system with distributed constant disturbances. We show that ISS of the original equation is equivalent to ISS of the transformed system.
Analysis of ISS of the transformed system is much easier to perform, for example, by means of Lyapunov methods.
We apply our methods to prove that an unstable heat equation with additive actuator disturbances can be ISS stabilized by means of PDE backstepping method. 

Although in this paper we concentrate on parabolic scalar equations and study properties of classical solutions of such equations, the scheme which we have developed here can be useful for other classes of monotone control systems: monotone parabolic systems, ordinary differential equations, ODE-heat cascades, some classes of time-delay systems. We expect that a big part of our analysis can be transferred to study properties of mild solutions of parabolic systems. Finally, some of our results can be used to study ISS of monotone systems of a general nature.




%

\end{document}